\newtheorem{thm}{Theorem}[section]
\newtheorem{cor}[thm]{Corollary}
\newtheorem{prop}[thm]{Proposition}
\newtheorem{lem}[thm]{Lemma}
\theoremstyle{definition}
\newtheorem{defn}[thm]{Definition}
\newtheorem{ex}[thm]{Example}
\newtheorem{rmk}[thm]{Remark}
\newtheorem*{ack}{Acknowledgments}
\numberwithin{equation}{section}
\newcommand{\U}{U}
\newcommand{\Kalt}{\hat{L}^{-1}}
\newcommand{\aalt}{b}
\newcommand{\Ialt}{J}
\newcommand{\Ralt}{S}
\newcommand{\Tr}{\mathrm{Tr}}
\newcommand{\sgn}{{\text{sgn}}}
\newcommand{\R}{\mathbb R}
\newcommand{\C}{\mathbb C}
\newcommand{\Z}{\mathbb {Z}}
\newcommand{\N}{\mathbb {N}}
\newcommand{\Omb}{\Omega}
\newcommand{\Om}{\Omega}
\title{An elementary differential extension of odd K-theory}
\author[T.~Tradler]{Thomas~Tradler}
  \address{Thomas Tradler,
  Department of Mathematics, College of Technology, City University of New York, 300 Jay Street, Brooklyn, NY 11201}
  \email{ttradler@citytech.cuny.edu}
\author[S.~Wilson]{Scott O. Wilson}
  \address{Scott O. Wilson, Department of Mathematics, Queens College, City University of New York, 65-30 Kissena Blvd., Flushing, NY 11367}
  \email{scott.wilson@qc.cuny.edu}
\author[M.~Zeinalian]{Mahmoud~Zeinalian}
  \address{Mahmoud Zeinalian, Department of Mathematics, C.W. Post Campus of Long Island University, 720 Northern Boulevard, Brookville, NY 11548, USA} 
  \email{mzeinalian@liu.edu}
\begin{document}
\maketitle

\begin{abstract}
There is an equivalence relation on the set of smooth maps of a manifold into the stable unitary group, defined using a Chern-Simons type form, whose equivalence classes form an abelian group under ordinary block sum of matrices. 
This construction is functorial, and defines a differential extension of odd K-theory, fitting into natural commutative diagrams and exact sequences involving K-theory and differential forms. To prove this we obtain along the way several results concerning even and odd Chern and Chern-Simons forms. 
\end{abstract}
\allowdisplaybreaks

\setcounter{tocdepth}{2}
\tableofcontents

\section{Introduction}

Differential cohomology theories provide a geometric refinement of cohomology theories which, intuitively, contain additional differential-geometric information. An important first example is \emph{ordinary differential cohomology}
where elements of ordinary cohomology, such as a first Chern class, are enriched by additional geometric data, such as a line bundle with connection, \cite{ChS}, \cite{D}.

The existence of such a differential refinement of a cohomology theory was proposed and proved by Hopkins and Singer in \cite{HS}, with an axiomatic characterization given in \cite{BS3}.  Several recent works have centered around the construction and 
properties of differential K-theory,  \cite{HS},  \cite{BS},  \cite{BS3}, \cite{L}, \cite{SS}, \cite{FL}, whose additive structure splits into even and odd degree parts, just as in K-theory. 

As with all cohomology and differential theories, it is particularly important to have nice geometric models for the theories, as these provide both a means by which to understand the theories, and the form in which they often appear in mathematical and physical discussions.  

In  \cite{SS} the authors construct the even degree part of differential K-theory geometrically 
using bundles with connection up to Chern-Simons equivalence.  In particular, they show the data of an extra odd differential form appearing in several other presentations is superfluous.  In this paper we provide the analogous geometric model for a differential extension of the odd degree of K-theory. 

For the model given here, a cocycle is simply a smooth map into the infinite unitary group, where equivalence of cocycles is 
 determined by a Chern-Simons type differential form, \emph{i.e.} the transgression form for the odd Chern character form. 
 Several new results proved here show that the resulting set of equivalence classes form an abelian group, determining a differential extension of odd K-theory (Definition \ref{defn:diffext} and Theorem \ref{thm:diffext}). 

Of particular importance, we show every exact odd form on any manifold $M$ is obtained as the odd Chern form of some map into the stable unitary group (Corollary \ref{cor:ChSurjOnExact}), and every even differential form, modulo exact, is given as a Chern-Simons form of a path of maps into the stable unitary group (Corollary \ref{cor:CSsurj}).
Finally, we also prove a differential-form version of Bott periodicity, identifying all even Chern forms, modulo exact forms, with the set of  Chern-Simons forms obtained from based loops in $Map(M,U)$ (Theorem \ref{thm:CSCh}).

In the appendix we show for completeness sake that if one includes the data of even differential forms 
into the construction, then the resulting differential extension (which is closer to the models in the literature) is in fact isomorphic to the leaner model in the bulk of the paper. 
The advantage of our first model lies in that it does not require the data of an additional even differential form.
In particular, this implies that a map from a compact manifold $M$ into the infinite unitary group $U$ contains calculable information beyond the homotopy class representing an element in $K^{-1}(M)$, and also calculable information beyond the odd Chern character given by pullback of the canonical odd form on $U$.

In \cite{BS} it is shown that differential extensions of odd K-theory are not unique. In fact there are ``exotic additive structures'' one can put on the underlying set of odd differential K-theory which still yield an odd differential extension. 
Nevertheless, since the abelian group structure we construct here is induced by ordinary block sum of matrices, it seems quite likely that the differential extension given here agrees with that of odd differential K-theory, which itself is unique up to a unique natural isomorphism \cite{BS3}.  

In future work we hope to build a multiplicative structure for the differential extension given here, which would in particular imply that the constructions in this paper yield odd differential K-theory, \cite{BS3}.
We also hope to apply the ideas in this paper to yield an elementary differential refinement of algebraic K-theory.
Finally, just as in \cite{TWZ} where it is shown that the even part of differential K-theory admits a refinement 
incorporating the free loopspace, we expect the same to be true for the odd version constructed here, and for this to have interesting applications to loop groups. 

\begin{ack}
We would like to thank Dennis Sullivan, Dan Freed, and Ulrich Bunke for useful conversations concerning the topics of this paper. The first and second authors were supported in part by grants from The City University of New York PSC-CUNY Research Award Program.
\end{ack}

\section{The odd Chern form}\label{SEC:odd-chern-form}
 
In this section we give elementary definitions of the odd Chern forms in differential geometry. We will consider the category of compact manifolds $M$ with corners, and smooth maps between them. Let $\Om^{\textrm{even}}(M;\R)$ and $\Om^{\textrm{odd}}(M;\R)$ denote the space of even and odd real differential forms on $M$, and 
$\Om^{\textrm{even}}_{cl}(M;\R)$ and $\Om^{\textrm{odd}}_{cl}(M;\R)$ denote the subspaces of closed differential forms.

Recall that for a complex bundle $E\to M$ with hermitian metric, equipped with a unitary connection $\nabla$, and associated curvature $R$, we have the following closed even real valued differential form, known as the Chern character\footnote{We note that the factor $1/(2 \pi i)$ in \eqref{eq:Ch(nabla)} is necessary and used in Theorem \ref{thm:CSCh}.},
\begin{equation}\label{eq:Ch(nabla)}
Ch(\nabla) = \Tr \left( e^{\frac{1}{2 \pi i} R}\right) \in \Om^{\textrm{even}}_{cl}(M;\R).
\end{equation}
Furthermore, for a path of such connections $\nabla^s$, there is an odd differential form
$CS(\nabla^s) \in \Om^{\textrm{odd}}(M;\R)$, known as the 
\emph{Chern-Simons form},
\begin{multline} \label{eq:CS}
CS(\nabla^s)\\
= \Tr \left( \int_0^1 \sum_{n\geq 0} \frac{1}{(2 \pi i)^{n+1}} \frac{1}{(n+1)!} 
\sum_{i=1}^{n+1} 
\overbrace{ R^s \wedge \dots\wedge R^s\wedge \underbrace{ \left( \nabla^s \right)' }_{i^{\text{th}}}\wedge R^s\wedge\dots\wedge R^s }^{n+1\text{ factors}} \right) ds 
\end{multline}
where $\left( \nabla^s \right) ' = \frac{\partial}{\partial s}  \nabla^s $ and $R^s$ is the curvature of $\nabla^s$. This form satisfies the property 
\begin{equation}\label{eq:CS(nabla^s)}
d CS(\nabla^s) = Ch(\nabla^1) - Ch(\nabla^0).
\end{equation}

Let  $U = \lim\limits_{n \to \infty}U(n)$ be the stable unitary group. Elements in this group can be interpreted as infinite unitary matrices whose entries differ from the identity in only finitely many places. So, such an element determines an element of $U(N)$ for $N$ sufficiently large, which is well defined up to block sum with an identity matrix.

 For any compact smooth manifold with corners $M$, let $Map(M, \U )$ be the topological space of smooth maps from $M$ to $U$, i.e.
 $Map(M, \U ) =  \lim\limits_{n \to \infty} Map(M,U(n))$, where $Map(M,U(n))$ is given the compact open topology.
  This is a topological monoid under the ordinary product of matrices in $U$.  It also true that $Map(M, \U )$ is a monoid under block sum, denoted $\oplus$, though this operation is not continuous. On homotopy classes of maps, which defines the set we denote by $K^{-1}(M)$, these two products are equal, and define an abelian topological group.

An element $g \in Map(M, \U )$ determines a connection $d + g^{-1} dg$ on the trivial bundle  
$\C^N \times M \to M$ for some large $N$, which is unitary with respect to the standard hermitian metric. This bundle with connection is well defined up to  direct sum with another trivial bundle $\C^k \times M \to M$, with standard flat ``zero'' connection $d$. Note that $d + g^{-1} dg$ is gauge equivalent to $d$ and flat, and so  $Ch(d + g^{-1} dg) = Ch(d)$ .
So,  for all $g \in Map(M, \U )$, if  $\nabla^s = d + s g^{-1} dg$ then by Equation \eqref{eq:CS(nabla^s)}, 
$CS(\nabla^s) \in \Omega^{\textrm{odd}}(M)$ is $d$-closed since
\[d\big(CS(d + sg^{-1} dg)\big)= Ch(d + g^{-1} dg) - Ch(d) = 0.\] 
 \begin{defn} 
 We define the odd Chern Character map, 
\[
Ch: Map(M, \U ) \to \Omega^{\textrm{odd}}_{cl} (M;\R),
\]
 by 
\begin{equation}\label{eq:Ch(g)}
Ch(g) = CS(d + s g^{-1} dg).
\end{equation}
\end{defn}

We remark that the definition above is independent of the integer $N$ chosen to realize the bundle with connection over $M$,
since the addition of an identity block to $g$ changes $g^{-1} dg$ only by a block sum with a zero matrix. Additionally,
this odd form is independent, modulo an exact form, of the choice of trivial connection $d$, see \cite{Zh}.
The notation is chosen so as to mirror exactly the presentation of the Chern-Character 
and Chern-Simons form in the even case, with a degree shift by one. 

The odd Chern character can be computed explicitly, see for example \cite{G}.

\begin{lem}\label{LEM:Ch-G=Ch-TWZ}
For any $g \in Map(M, \U )$ we have
\[
Ch(g) =  \Tr \sum_{n\geq 0} \frac{(-1)^n }{(2 \pi i)^{n+1}} \frac{n!}{(2n+1)!}   (g^{-1}dg)^{2n+1} 
\]
\end{lem}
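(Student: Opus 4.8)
The plan is to compute $Ch(g) = CS(d + s\,g^{-1}dg)$ directly from the definition \eqref{eq:CS}, exploiting the special structure of the Maurer--Cartan form. Writing $\omega = g^{-1}dg$, I would first record the structure equation $d\omega = -\,\omega\wedge\omega$, which follows from $d(g^{-1}) = -g^{-1}(dg)g^{-1}$. Applying this to the path $\nabla^s = d + s\omega$ gives immediately $(\nabla^s)' = \omega$ and curvature
\[
R^s = d(s\omega) + (s\omega)\wedge(s\omega) = s\,d\omega + s^2\,\omega\wedge\omega = (s^2-s)\,\omega\wedge\omega,
\]
so that every factor occurring in the integrand of \eqref{eq:CS} is a scalar multiple of $\omega$ (from $(\nabla^s)'$) or of $\omega\wedge\omega$ (from each $R^s$).

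The key simplification is that, for each fixed $n$, all $n+1$ terms in the inner sum over $i$ are the \emph{same} matrix-valued form, so no graded-cyclicity property of the trace is needed. Indeed, a term with $\omega$ inserted in the $i$-th slot among $n$ copies of $R^s$ is a product of $n$ factors $(s^2-s)\,\omega\wedge\omega$ and one factor $\omega$; since these factors are concatenated by matrix multiplication and wedge, the result is $(s^2-s)^n\,\omega^{2n+1}$ regardless of the position $i$. Hence the inner sum simply contributes a factor $n+1$, which cancels $1/(n+1)!$ down to $1/n!$, and \eqref{eq:CS} collapses to
\[
Ch(g) = \Tr\sum_{n\geq 0}\frac{1}{(2\pi i)^{n+1}}\frac{1}{n!}\,\omega^{2n+1}\int_0^1 (s^2-s)^n\,ds.
\]

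The only remaining computation is the scalar integral, which I expect to be the sole nonformal step. Factoring $(s^2-s)^n = (-1)^n s^n(1-s)^n$ and recognizing the Beta integral $\int_0^1 s^n(1-s)^n\,ds = \tfrac{n!\,n!}{(2n+1)!}$ gives $\int_0^1 (s^2-s)^n\,ds = (-1)^n\tfrac{(n!)^2}{(2n+1)!}$. Substituting and simplifying $\tfrac{1}{n!}\cdot\tfrac{(n!)^2}{(2n+1)!} = \tfrac{n!}{(2n+1)!}$ reproduces the stated formula with $\omega = g^{-1}dg$. The main point requiring care is the sign bookkeeping leading to $R^s = (s^2-s)\,\omega\wedge\omega$ via the Maurer--Cartan relation; once that identity and the observation that the inner terms coincide are in hand, the rest is matching constants against a standard Beta function.
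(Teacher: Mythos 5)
Your proposal is correct and follows essentially the same route as the paper's proof: both compute $R^s = -s(1-s)(g^{-1}dg)^2$ from the Maurer--Cartan relation, collapse the inner sum over insertion positions (each term being $(s^2-s)^n\omega^{2n+1}$) to cancel $\frac{1}{(n+1)!}$ down to $\frac{1}{n!}$, and evaluate the Beta integral $\int_0^1 s^n(1-s)^n\,ds = \frac{(n!)^2}{(2n+1)!}$. Your explicit remark that all $n+1$ inner terms coincide, so no cyclicity of the trace is needed, is a point the paper leaves implicit but is otherwise the identical argument.
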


\begin{proof} Let $A = g^{-1}dg$  and $A_s = s g^{-1}dg$ so that $(A^s)'=A$ and $R^s=-s(1-s)A\cdot A$. Then 
\begin{eqnarray*}
Ch(g) &=& \Tr  \sum_{n\geq 0} \frac 1 {(n+1)!} \frac{1}{(2 \pi i)^{n+1}} \sum_{i=1}^{n+1} \int_0^1 R^s \dots \underbrace{(A^s)'}_{i\text{-th}}\dots R^s ds   \\
&=&   \sum_{n\geq 0} \frac 1 {n!} \frac{1}{(2 \pi i)^{n+1}} \Tr \int_0^1 (-s(1-s))^n \underbrace{A\dots A}_{2n+1 \text{ factors}} ds \\
&=& \sum_{n\geq 0} \frac{1}{n!}\frac{1}{(2 \pi i)^{n+1}} (-1)^n \frac{n!n!}{(2n+1)!}  \Tr(A^{2n+1})  \\
&=& \sum_{n\geq 0} \frac{(-1)^n}{(2 \pi i)^{n+1}} \frac{n!}{(2n+1)!} \Tr(A^{2n+1}) 
\end{eqnarray*}
where the relation $\int_0^1  s^k (1-s)^\ell ds=\frac{k!\ell!}{(k+\ell+1)!}$ was used.
\end{proof}

\begin{defn}\label{def:Theta}
Let $\omega$ be the canonical left-invariant $1$-form on $U(n)$, with values in the Lie algebra $\mathfrak{u} (n)$ of $U(n)$,
and define
 $\Theta \in \Om^{\textrm{odd}}( U(n);\R)$ by
\begin{equation}\label{eq:Theta}
\Theta = \Tr \sum_{n\geq 0} \frac{(-1)^n }{(2 \pi i)^{n+1}} \frac{n!}{(2n+1)!}   \omega^{2n+1} .
\end{equation}
Note this is real valued since $\omega / i$ and $\omega^2/i $ are hermitian.
\end{defn}

The odd Chern character $Ch(g)$ is simply the pullback of $\Theta$ along a map $g: M \to U$. 
\begin{lem}\label{lem:Ch-from-Theta}
For any $g: M \to U(n)$, we have 
\[Ch(g) = g^*(\Theta)=\Tr \sum_{n\geq 0} \frac{(-1)^n }{(2 \pi i)^{n+1}} \frac{n!}{(2n+1)!}   g^*(\omega)^{2n+1}.\]
\end{lem}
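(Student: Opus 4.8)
The plan is to reduce the statement directly to Lemma \ref{LEM:Ch-G=Ch-TWZ} by identifying the pullback $g^*(\omega)$ with the matrix-valued form $g^{-1}dg$. The essential point is that the canonical left-invariant one-form $\omega$ on $U(n)$ appearing in Definition \ref{def:Theta} is the Maurer--Cartan form, which in terms of the tautological matrix-valued inclusion $u : U(n) \hookrightarrow GL(n,\C)$ is given by $\omega = u^{-1}\,du$.

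First I would establish the naturality identity $g^*(\omega) = g^{-1}\,dg$. Viewing $g : M \to U(n)$ as a matrix-valued function, so that $u \circ g = g$, the chain rule gives $g^*(du) = d(g^*u) = dg$ and $g^*(u^{-1}) = g^{-1}$. Since pullback is an algebra homomorphism on matrix-valued forms — commuting entrywise with the matrix multiplication that expresses $\omega = u^{-1}du$ — this yields $g^*(\omega) = g^{-1}\,dg$.

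Next, because pullback of differential forms commutes with wedge products, with the (entrywise) trace $\Tr$, and with the termwise application to the formal sum, I would apply $g^*$ to each term of the expression \eqref{eq:Theta} defining $\Theta$. Substituting $g^*(\omega) = g^{-1}\,dg$ then gives
\[
g^*(\Theta) = \Tr \sum_{n\geq 0} \frac{(-1)^n }{(2 \pi i)^{n+1}} \frac{n!}{(2n+1)!}\, (g^{-1}dg)^{2n+1}.
\]
Comparing the right-hand side with Lemma \ref{LEM:Ch-G=Ch-TWZ} identifies this with $Ch(g)$, completing the proof. There is no serious obstacle here; the only step genuinely requiring care is the standard verification that the pullback of the Maurer--Cartan form is $g^{-1}dg$, together with the compatibility of pullback with the trace of products of matrix-valued forms.
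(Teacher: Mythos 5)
Your proposal is correct and is essentially the paper's own proof: the paper likewise deduces the lemma from Lemma \ref{LEM:Ch-G=Ch-TWZ} together with the identity $g^*(\omega)=g^{-1}dg$. Your additional verification that pullback commutes with products, trace, and the sum merely spells out standard details the paper leaves implicit.
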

\begin{proof}
This follows from Lemma \ref{LEM:Ch-G=Ch-TWZ} and the fact that $g^*(\omega)=g^{-1}dg$.
\end{proof}

It follows that the odd Chern character is natural via pullback along maps $f: N \to M$, and that $Ch(g)$ is a well defined piecwise smooth form whenever $g: M \to \U$ is piecewise smooth. 

For two elements $g,h\in Map(M, \U )$, we denote by $g\oplus h\in Map(M, \U )$ and by $g^{-1}\in Map(M, \U )$ the elements given by taking the block sums and inverses in $U$, respectively.

\begin{cor} \label{cor:Chadd}
The Chern Character map $Ch: Map(M, \ U) \to \Omega^{\textrm{odd}}_{cl} (M;\R)$
is a monoid homomorphism,
i.e.
\[ 
Ch(  g \oplus h ) = Ch(g) + Ch(h),
\]
and furthermore satisfies $Ch(g^{-1}) = -Ch(g)$.
\end{cor}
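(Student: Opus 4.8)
The plan is to read off both statements directly from the explicit formula of Lemma \ref{LEM:Ch-G=Ch-TWZ}, so that everything reduces to elementary matrix and trace manipulations. Write $A = g^{-1}dg$ and $B = h^{-1}dh$, and abbreviate the universal coefficients by $c_n = \frac{(-1)^n}{(2\pi i)^{n+1}}\frac{n!}{(2n+1)!}$, so that $Ch(g) = \Tr\sum_{n\ge 0} c_n A^{2n+1}$.

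For additivity, I would first observe that block sum is a group homomorphism compatible with exterior differentiation entrywise, so that $(g\oplus h)^{-1}d(g\oplus h) = (g^{-1}\oplus h^{-1})(dg \oplus dh) = A \oplus B$, a block-diagonal matrix of $1$-forms. Since powers of a block-diagonal matrix are computed blockwise, $(A\oplus B)^{2n+1} = A^{2n+1}\oplus B^{2n+1}$, and since the trace of a block-diagonal matrix is the sum of the traces of its blocks, $\Tr((A\oplus B)^{2n+1}) = \Tr(A^{2n+1}) + \Tr(B^{2n+1})$. Summing against $c_n$ gives $Ch(g\oplus h) = Ch(g) + Ch(h)$.

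For the behavior under inverses, the one computation that needs care is the Maurer--Cartan form of $g^{-1}$. Differentiating $g\,g^{-1} = I$ yields $d(g^{-1}) = -g^{-1}(dg)g^{-1}$, hence $(g^{-1})^{-1} d(g^{-1}) = g\,d(g^{-1}) = -(dg)g^{-1} = -gAg^{-1}$; that is, the relevant $1$-form is conjugate to $-A$. Because $g$ is a matrix-valued $0$-form, the conjugation telescopes through products, $(-gAg^{-1})^{2n+1} = (-1)^{2n+1} g A^{2n+1} g^{-1} = -g A^{2n+1} g^{-1}$, the odd exponent contributing the crucial sign. Applying cyclicity of the trace (valid here since $g$ has even form-degree) gives $\Tr((-gAg^{-1})^{2n+1}) = -\Tr(A^{2n+1})$, and summing against $c_n$ yields $Ch(g^{-1}) = -Ch(g)$.

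The only genuine subtlety is the sign bookkeeping in the inverse statement: one must combine the correct formula for $d(g^{-1})$, the telescoping of the conjugation under the odd power $2n+1$, and graded cyclicity of the trace. The additivity statement, by contrast, is purely formal once the block-diagonal form of the Maurer--Cartan form is recorded.
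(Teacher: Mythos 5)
Your proof is correct and follows essentially the same route as the paper: both deduce the claims from the explicit formula in Lemma \ref{LEM:Ch-G=Ch-TWZ}, using that $(g\oplus h)^{-1}d(g\oplus h)=g^{-1}dg\oplus h^{-1}dh$ together with additivity and cyclic invariance of the trace, and the identity $(g^{-1})^{-1}d(g^{-1})=-dg\,g^{-1}$ for the inverse statement. You merely spell out the conjugation-telescoping and sign bookkeeping that the paper leaves implicit, which is fine.
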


\begin{proof} These follows from the previous Lemma \ref{LEM:Ch-G=Ch-TWZ} since
\[
( g \oplus h)^{-1}d(g \oplus h) = (g^{-1}dg) \oplus h^{-1}dh,
\]
trace is additive, and cyclically invariant, and $(g^{-1})^{-1} d(g^{-1}) = -dg g^{-1}$.
\end{proof}

\begin{ex}[$M$ is a sphere] \label{ex:oddChSphere}
For all $n \geq 0$, there is a map $g: S^{2n+1} \to \U$ such that
the degree $2n+1$ part of the differential form $Ch(g)$ equals a non-zero constant multiple of the standard volume form 
on the $(2n+1)$-sphere $S^{2n+1} \subset \R^{2n+2}$. The earliest reference we found for this
 is \cite[p. 1496, eq. (2.13)]{LT}, where the authors construct a map $g : S^{2n+1}  \to U(2^{n+1})$ 
 by use of the Clifford algebra $Cl_{-}(2n+2)$ with generators $\gamma_1, \ldots, \gamma_{2n+3}$. 
 Explicitly, the map is given by
\[
g = \gamma_{2n+2} \sum_{i=1}^{2n+2} \gamma_i x_i
\]
for $(x_1, \ldots, x_{2n+2}) \in S^{2n+1} \subset \R^{2n+2}$. We remark that this construction satisfies two further properties: 
$Ch(g)$ is exact in degrees $k < 2n+1$ and vanishes in degrees $k > 2n+1$. These follow from the deRham theorem and the cohomology of $S^{2n+1}$, since $Ch(g)$ is closed.
\end{ex}

\begin{cor} \label{cor:ChSurjOnExact}
Let $Map^o(M, \U )$ denote the identity component of \linebreak $Map(M, \U )$. For all compact manifolds $M$ with corners,
the image of the map $Ch: Map^o(M, \U ) \to  \Omega^{\textrm{odd}}_{cl} (M;\R)$ contains 
all exact odd real valued differential forms on $M$.
\end{cor}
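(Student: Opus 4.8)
The plan is to exploit three features already in place: the additivity of $Ch$ under block sum (Corollary \ref{cor:Chadd}), the explicit maps on odd spheres (Example \ref{ex:oddChSphere}), and naturality under pullback. First I would record that the image $Ch(Map^o(M,\U))$ is a subgroup of $\Omega^{\mathrm{odd}}_{cl}(M;\R)$: the identity component is closed under block sum and matrix inversion (any null-homotopy stays in a fixed $U(N)$, where block sum and inversion are continuous), so by Corollary \ref{cor:Chadd} the image is closed under addition and negation. Hence it suffices to produce, for an arbitrary exact form $d\beta$, a finite collection of null-homotopic maps whose Chern forms add up to $d\beta$; these can then be combined into a single map by one block sum.

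I would next reduce to a single homogeneous piece by downward induction on the top degree. If $d\beta$ has top-degree component in degree $2K+1$, that component is itself exact (it equals $d\beta_{2K}$), and each homogeneous component of a closed form is closed since $d$ raises degree. Suppose one can find a null-homotopic $h$ with $Ch(h)$ supported in degrees $\le 2K+1$ and with degree-$(2K+1)$ part equal to the top component of $d\beta$. Then $d\beta - Ch(h)$ is again exact, now with top degree $\le 2K-1$, and the induction continues; the base case (degree $1$) is handled directly by $g=e^{2\pi i f}$, for which Lemma \ref{LEM:Ch-G=Ch-TWZ} gives $Ch(g)=df$ with no higher terms. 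To realize one top component I would further reduce to decomposables: embedding the compact $M$ in some $\R^L$, every $2k$-form is a finite $C^\infty(M)$-combination of wedges $f_0\,df_1\wedge\cdots\wedge df_{2k}$ of restricted coordinate functions, so every exact $(2k+1)$-form is a finite sum of wedges $df_0\wedge df_1\wedge\cdots\wedge df_{2k}$. By additivity it is then enough to realize a single such wedge as the top part of a Chern form.

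The realization step is where Example \ref{ex:oddChSphere} enters, and it is the main obstacle. Given functions $f_0,\dots,f_{2k}$, I would build a map $\phi\colon M\to S^{2k+1}$ that misses a point, hence is null-homotopic, by composing $F=(f_0,\dots,f_{2k})\colon M\to\R^{2k+1}$ with an embedding $\R^{2k+1}\hookrightarrow S^{2k+1}$, and then pull back the Example's map $g_{(k)}$. By naturality $Ch(\phi^*g_{(k)})=\phi^*Ch(g_{(k)})$, which by the stated properties of $g_{(k)}$ vanishes above degree $2k+1$, is exact in lower degrees, and has degree-$(2k+1)$ part a nonzero multiple of $\phi^*(\mathrm{vol})$. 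The delicate point is that $\phi^*(\mathrm{vol})$ is a Jacobian-weighted multiple of $df_0\wedge\cdots\wedge df_{2k}$ rather than that wedge itself; I would remove the weight by an elementary change of variables in the $F$-coordinates, equivalently a Moser-type normalization on a neighborhood of the compact image $F(M)$, arranging $\phi^*(\mathrm{vol})=df_0\wedge\cdots\wedge df_{2k}$, and then absorb the leftover constant by rescaling one function. The lower-degree terms produced this way are exact and of strictly smaller degree, so they are harvested by the downward induction rather than obstructing it. The genuinely computational heart, which I would import wholesale from Example \ref{ex:oddChSphere}, is the nonvanishing of the top Chern form on the sphere together with the exactness and vanishing of its remaining components; everything else is the bookkeeping of degrees and the final block-sum assembly.
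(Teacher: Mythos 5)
Your proposal is correct and follows essentially the same route as the paper's proof: both rest on closure of the image under block sum and inversion via Corollary \ref{cor:Chadd}, reduction (after embedding $M$ in Euclidean space) to decomposable pieces $df_0\wedge df_1\wedge\cdots\wedge df_{2k}$, and realization of such a piece by pulling back the sphere map of Example \ref{ex:oddChSphere} through a Moser-normalized disk chart, with the residual scalar absorbed by rescaling functions together with block sums (your ``rescale one function'' is the paper's $\frac{1}{N}f$-scaling followed by an $N$-fold block sum, which is needed since rescaling alone can push the image out of the normalized chart). The remaining deviations are cosmetic: you induct downward on the top degree where the paper inducts upward, and you obtain null-homotopy by noting that $\phi$ misses a point of $S^{2k+1}$ where the paper instead maps out of a contractible neighborhood $W\subset\R^k$.
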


 \begin{rmk}
 It is already known from \cite{SS} that every odd form on $M$ equals $CS(\nabla^s)$ for some path of connections $\nabla^s$.   
 Corollary \ref{cor:ChSurjOnExact} states furthermore that, if the odd form on $M$ is \emph{exact},
 the path of connections can be chosen to be $\nabla^s = d + s g^{-1} dg$  for some $g \in Map^o(M, U)$.
 \end{rmk}

\begin{proof} If $\omega \in \Om^1(M;\R)$ is exact,
 then $\omega = df$ for some function $f: M \to \R$. We let $g = e^{2 \pi i f}$,
so that $Ch(g)$ equals $df= \omega$, where $Ch(g)$ vanishes outside degree $1$ since $(g^{-1} dg)^2 = 0$. Finally, 
note $g \in Map^o(M, U(1) )$ since $g_t = e^{2 \pi i t\cdot f }$ is a path to the constant identity map.

By induction we assume we may obtain all odd exact forms on $M$ of degree less than or equal to $2j-1$ as $Ch(g)$ 
for some map $g \in Map^o(M, \U )$. We may always choose an imbedding $M \to \R^k$ for some $k > 2j + 1$, and since the pullback map is natural and surjective on exact forms, and $Ch$ is natural via pullback, it suffices to show 
for every exact $\omega \in \Om^{2j+1}(\R^k)$ there is a map $Map^o(\R^k, \U )$ such that $Ch(g)$ equals $\omega$ in degree
$2j+1$, vanishes in higher degree, and is exact in lower degrees. Then by the inductive hypothesis, using the relations
in Corollary \ref{cor:Chadd}, we may use block sum and inverses to construct
 all odd exact forms on $M$ of degree less than or equal to $2j+1$ as $Ch(g)$ for some 
 $Map^o(\R^k, \U )$, completing the induction.
 
An arbitrary exact $2j+1$ form on $\R^k$ can be written as a sum of terms of the form $df dx_{i_1} \ldots dx_{i_{2j}}$, so again by Corollary \ref{cor:Chadd}, using block sums, it suffices to show that for an arbitrary function $f: \R^k \to \R$  there is a map $h: W \to \U$, where $W \subset \R^k$ is some neighborhood of $M$, such that $Ch(h)$ equals $df dx_{1} \ldots dx_{2j}$ in degree $2j+1$, vanishes in higher degree, and is exact in lower degrees. In order to define $h: W \to \U $, we consider the following composition of maps,
\[
\xymatrix{
  W \ar[r]^-F \ar[r]  & D_r^{2j+1} \ar[r]^-s & S^{2j+1} \ar[r]^-g & U(2^{j+1}) \subset \U \\
 \frac 1 N df dx_{1} \ldots dx_{2j} &  dx_{1} \ldots dx_{2j+1}=s^*(dV) & dV &
}
\]
Here $g: S^{2j+1} \to U(2^{j+1}) \subset \U$ is from Example \ref{ex:oddChSphere}, giving up to a constant the volume form  $dV$  on $S^{2j+1}$ as the degree $2j+1$ component of $Ch(g)$, with all lower degree components exact, and all higher degree components vanishing. Let $s: D^{2j+1} \to S^{2j+1}$ be any smooth imbedding of the disk into the sphere. 
 The pullback of the form $dV$ on the sphere is some volume form on the disk. Using Moser's theorem, and rescaling the disk to appropriate radius $r$, we may assume that this pullback form is the standard volume form $dx_1 \cdots dx_{2j+1}$ on  $D_r^{2j+1} $.
 
 Now, for some contractible neighborhood $W\subset \R^k$ contained in a compact set, choose $N \in \N$ so that 
the map    
\[
F(x_1, \ldots , x_k) = (1/N f(x_1, \ldots , x_k), x_1, \ldots , x_{2j})
\]
has image contained in $D^{2j+1}$. Then $F^*(dx_{1} \ldots dx_{2j+1} ) =  1/ N df dx_{1} \ldots dx_{2j}$, so that the pullback gives $1/N$ times the desired form $dfdx_{i_1}\dots dx_{i_{2j}}$. Taking the $N$-fold block sum of this map $(g\circ s\circ F):W\to \U $ with itself defines the desired function $h:  W \to \U$. Finally, we have $h \in Map^o(W, \U ) $ since $W$ is contractible.
\end{proof}

\section{The Chern Simons form}

 It is a natural question to ask how the odd Chern form $Ch(g)$ depends on $g$. Let $I = [0,1]$. For a smooth map $g_t : M \times I \to \U(n) \subset \U$, there is a smooth even differential form that interpolates between $Ch(g_1)$ and $Ch(g_0)$, in the following way. 
We define
 $CS(g_t) \in \Om^{\textrm{even}}(M;\R)$ by
 \begin{equation}\label{eq:CS(gt)}
 CS(g_t) = \int_I Ch(g_t),
\end{equation}
where $Ch(g_t) \in  \Om^{\textrm{odd}}_{cl}(M\times I;\R)$ and $\int_I$ is the integration along the fiber
\[
\xymatrix{
M \times I \ar[r]^-{g_t} \ar[d]_-{\int_I} & U\\
M & 
}
\]
We note that $CS(g_t)$ is independent of parameterization, and so any $g_t: M \times [a,b] \to \U$ can always be reparameterized to domain $M \times I$, without changing the Chern-Simons form on $M$.

By Stokes' theorem we have
\begin{equation*}
d CS(g_t) = d\int_I Ch(g_t) = \int_{\partial I} Ch(g_t) - \int_I d Ch(g_t) =Ch(g_1) - Ch(g_0)
\end{equation*}
since $d Ch(g_t) = 0$.
 
 \begin{lem} \label{oddCS}
 For any $g_t \in Map(M \times I, \U )$ the \emph{odd Chern Simons form} 
 $CS(g_t)  \in  \Omega^{\textrm{even}} (M;\R)$ associated to $g_t$  is  
\[
CS(g_t) = \Tr \sum_{n\geq 0} \frac{(-1)^n}{(2 \pi i)^{n+1}}  \frac{n!}{(2n)!} \int_0^1 (g_t^{-1} g_t') \cdot (g_t^{-1} dg_t)^{2n} dt .
 \]
\end{lem}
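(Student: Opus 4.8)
The plan is to substitute the explicit formula for the odd Chern character from Lemma \ref{LEM:Ch-G=Ch-TWZ} into the definition $CS(g_t)=\int_I Ch(g_t)$ and carry out the fibre integration term by term. Writing $G=g_t$ for the map $M\times I\to U(n)$ and splitting the de Rham differential on $M\times I$ into its $M$- and $I$-components, the Maurer--Cartan form decomposes as $G^{-1}dG = A + B$, where $A=g_t^{-1}dg_t$ is the $M$-directional part (a matrix-valued $1$-form containing no $dt$) and $B = (g_t^{-1}g_t')\,dt$. Since the trace commutes with integration along the fibre, it suffices to compute $\int_I (A+B)^{2n+1}$ for each $n$ and then reassemble the series with its coefficients $\frac{(-1)^n}{(2\pi i)^{n+1}}\frac{n!}{(2n+1)!}$.

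Next I would exploit that $B$ is proportional to $dt$, so that $B\wedge B=0$; hence in the expansion of $(A+B)^{2n+1}$ every monomial with two or more factors of $B$ vanishes, while the single monomial $A^{2n+1}$ contains no $dt$ and integrates to zero along the fibre. Only the $2n+1$ terms $A^{i-1}BA^{2n+1-i}$, $i=1,\dots,2n+1$, survive. For each such term I would pull the scalar form $dt$ to the front, producing a Koszul sign $(-1)^{i-1}$, and use the convention $\int_I dt\wedge\phi=\int_0^1\phi\,dt$ (the one compatible with the Stokes relation $dCS(g_t)=Ch(g_1)-Ch(g_0)$ recorded above), so that the $i$-th term contributes $(-1)^{i-1}\int_0^1 \Tr\big(A^{i-1}(g_t^{-1}g_t')A^{2n+1-i}\big)\,dt$.

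The crux --- and the step I expect to be the main obstacle --- is the sign accounting. Using the graded cyclicity of the trace I would cycle $A^{2n+1-i}$ to the front, incurring a further sign $(-1)^{(i-1)(2n+1-i)}$ and collapsing every term to $\Tr\big((g_t^{-1}g_t')A^{2n}\big)$. The total sign is $(-1)^{(i-1)+(i-1)(2n+1-i)}=(-1)^{(i-1)(2n+2-i)}$; setting $j=i-1$ this exponent equals $j(2n+1-j)\equiv j-j^2\equiv 0\pmod 2$, so every sign is $+1$ and all $2n+1$ terms coincide. Hence $\Tr\int_I(A+B)^{2n+1}=(2n+1)\int_0^1\Tr\big((g_t^{-1}g_t')A^{2n}\big)\,dt$. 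Finally, multiplying by the coefficient and using $(2n+1)\cdot\frac{n!}{(2n+1)!}=\frac{n!}{(2n)!}$ yields exactly the claimed formula; the only genuine work is verifying the sign cancellation just described, the remainder being bookkeeping.
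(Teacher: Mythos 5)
Your proposal is correct and takes essentially the same route as the paper's proof: both substitute the explicit Chern character formula (Lemma \ref{LEM:Ch-G=Ch-TWZ}, equivalently the pullback of $\Theta$) into $CS(g_t)=\int_I Ch(g_t)$, decompose $g_t^*(\omega)=(g_t^{-1}g_t')\,dt+g_t^{-1}dg_t$, and reduce via fiber integration and the graded cyclicity of the trace. The sign accounting you verify explicitly, namely that $(-1)^{(i-1)(2n+2-i)}=+1$ so all $2n+1$ surviving terms agree and $(2n+1)\cdot\frac{n!}{(2n+1)!}=\frac{n!}{(2n)!}$, is precisely the bookkeeping the paper compresses into the phrase ``the result follows by the definition of integration along the fiber, and the fact that trace is cyclic.''
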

 
 \begin{proof}
By Lemma \ref{lem:Ch-from-Theta}, we have $Ch(g_t)=\Tr \sum_{n\geq 0} \frac{(-1)^n }{(2 \pi i)^{n+1}} \frac{n!}{(2n+1)!}  g_t^*(\omega)^{2n+1}$.  The pullback of the canonical left invariant form $\omega$ on $U(n)$ along the map $g_t: M \times I \to U(n)$ is $g_t^*(\omega) = \iota_{\frac{\partial}{\partial t}} (g_t^{-1} dg_t)  dt  + g_t^{-1}dg_t = (g_t^{-1} g'_t ) dt  + g_t^{-1}dg_t$, so the result follows by the definition of integration along the fiber, and the fact that trace is cyclic.
 \end{proof}

We restate the fundamental property for $CS(g_t)$ here, along with several others,  whose proofs are immediate from the definitions and Lemma \ref{oddCS}.
\begin{prop} \label{prop:CSprop}
For any paths $g_t, h_t \in Map(M\times I, \U )$  we have
\begin{eqnarray*}
  d CS(g_t) &=& Ch(g_1) - Ch(g_0) \\
  CS(g_t \oplus h_t) &=& CS(g_t) + CS (h_t) \\
  CS(g_t^{-1}) &=& - CS(g_t)
\end{eqnarray*}
If $g_t$ and $h_t$ can be composed (\emph{i.e.} if $g_1=h_0$), then the composition $g_t * h_t$ satisfies
\[ CS(g_t*h_t)=CS(g_t)+CS(h_t). \]
\end{prop}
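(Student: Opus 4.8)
The plan is to handle the differential identity and the concatenation identity through the fiber-integral description $CS(g_t)=\int_I Ch(g_t)$, and the block-sum and inverse identities through the explicit trace formula of Lemma~\ref{oddCS}, in direct analogy with Corollary~\ref{cor:Chadd}. For the first identity, $dCS(g_t)=Ch(g_1)-Ch(g_0)$, I would simply restate the Stokes-theorem computation for integration along the fiber given immediately before the statement: $d\int_I Ch(g_t)=\int_{\partial I}Ch(g_t)-\int_I dCh(g_t)$, and since each $Ch(g_t)$ is closed the last term vanishes.

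For the block-sum identity I would start from the formula in Lemma~\ref{oddCS} and use the two elementary facts $(g_t\oplus h_t)^{-1}d(g_t\oplus h_t)=(g_t^{-1}dg_t)\oplus(h_t^{-1}dh_t)$ and $(g_t\oplus h_t)^{-1}(g_t\oplus h_t)'=(g_t^{-1}g_t')\oplus(h_t^{-1}h_t')$. Since a product of block-diagonal matrices is again block-diagonal, each integrand $(g_t^{-1}g_t')\cdot(g_t^{-1}dg_t)^{2n}$ becomes the block sum of the corresponding integrands for $g_t$ and for $h_t$, and additivity of the trace over block sums splits the entire series term by term, yielding $CS(g_t\oplus h_t)=CS(g_t)+CS(h_t)$.

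For the inverse identity, set $A=g_t^{-1}dg_t$ and $B=g_t^{-1}g_t'$. A direct computation gives $(g_t^{-1})^{-1}d(g_t^{-1})=-dg_t\,g_t^{-1}=-g_tAg_t^{-1}$ and $(g_t^{-1})^{-1}(g_t^{-1})'=-g_t'\,g_t^{-1}=-g_tBg_t^{-1}$. Substituting into the $n$-th integrand and telescoping the inner conjugations gives $(-g_tBg_t^{-1})(-g_tAg_t^{-1})^{2n}=(-1)^{2n+1}g_t(BA^{2n})g_t^{-1}$; because $g_t$ is a $0$-form, cyclic invariance of the trace removes the conjugation without sign, and the surviving factor $(-1)^{2n+1}=-1$ appears in every term, giving $CS(g_t^{-1})=-CS(g_t)$. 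Finally, for the concatenation identity, I would invoke the reparameterization invariance of $CS$ noted in the text: integrating $Ch$ along the fiber over the full interval decomposes as the sum over the two subintervals on which $g_t*h_t$ restricts, after reparameterization, to $g_t$ and to $h_t$, so $CS(g_t*h_t)=CS(g_t)+CS(h_t)$.

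I expect the only genuinely delicate point to be the sign bookkeeping in the inverse case: one must check that each of the $2n+1$ factors contributes exactly one sign through the conjugation formula, and that the $0$-form $B$ introduces no additional sign under the cyclic permutation inside the trace. Everything else reduces to the formal additivity and cyclicity of the trace, together with Stokes' theorem for the fiber integral.
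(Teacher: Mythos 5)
Your proposal is correct and follows exactly the route the paper intends: the paper gives the Stokes-theorem computation for $dCS(g_t)$ immediately before the proposition and declares the remaining identities ``immediate from the definitions and Lemma~\ref{oddCS},'' which is precisely what you carry out (block-diagonal additivity of the trace, the conjugation identity $(g^{-1})^{-1}d(g^{-1})=-dg\,g^{-1}$ with cyclic invariance, and reparameterization invariance of the fiber integral for concatenation). Your sign bookkeeping in the inverse case is accurate, matching the analogous computation in Corollary~\ref{cor:Chadd}.
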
 

We note that composition may be done smoothly by defining $g_t * h_t$ to be constant equal to $g_1=h_0$ on some interval, and reparametizing to obtain a function on $M \times I$.

The following lemma shows that the degree zero part of the even form $CS(g_t)$ can be interpreted as a ``winding number.''
Let $\Omb U$ be the space of smooth based loops in $U$.

\begin{lem} For any  based loop $g_t \in \Omb U$, 
\[
\frac { 1 }{ 2 \pi i } \Tr \int_0^1 g_t^{-1} g_t' \,  dt \in \Z
\]
\end{lem}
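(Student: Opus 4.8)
The plan is to show that the expression
\[
\frac{1}{2\pi i}\Tr\int_0^1 g_t^{-1}g_t'\,dt
\]
is precisely the degree-zero component of $CS(g_t)$, and then to identify it with a genuine integer winding number. First I would observe that the $n=0$ term in Lemma \ref{oddCS} is exactly $\frac{1}{2\pi i}\Tr\int_0^1 (g_t^{-1}g_t')\,dt$, which is the only term of $CS(g_t)$ living in degree zero on $M$; all terms with $n\geq 1$ contain at least two factors of $g_t^{-1}dg_t$ and hence have positive form-degree on $M$. So the quantity in question is a real number, and the task reduces to proving it is an integer.

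Next I would reduce to the finite-dimensional case, since a based loop $g_t\in\Om U$ factors through some $U(N)$ for $N$ large, and the trace and the integral are unaffected by block sum with the identity. For a loop in $U(N)$, the natural approach is to relate $\Tr(g_t^{-1}g_t')$ to the determinant: using the identity $\frac{d}{dt}\log\det g_t = \Tr(g_t^{-1}g_t')$ (valid since $\det$ is a smooth homomorphism and its logarithmic derivative is the trace of the logarithmic derivative), one gets
\[
\Tr\int_0^1 g_t^{-1}g_t'\,dt = \int_0^1 \frac{d}{dt}\log\det g_t\,dt.
\]
Thus the quantity equals $\frac{1}{2\pi i}$ times the total change in $\log\det g_t$ as $t$ runs from $0$ to $1$. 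Since $g_t$ is a based loop, $g_0=g_1$, so $\det g_0=\det g_1$, and the composite $t\mapsto \det g_t$ is a loop in $U(1)=\{z\in\C:|z|=1\}$. The integral $\frac{1}{2\pi i}\int_0^1 \frac{d}{dt}\log\det g_t\,dt$ is then exactly the winding number of this loop around the origin, which is an integer.

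The cleanest way to make the last step rigorous is to write $\det g_t = e^{2\pi i \theta(t)}$ for a continuous (indeed smooth) real-valued lift $\theta$, which exists because $t\mapsto \det g_t$ is a loop in the circle and $\R\to U(1)$, $\theta\mapsto e^{2\pi i\theta}$, is the universal cover; then $\frac{1}{2\pi i}\frac{d}{dt}\log\det g_t = \theta'(t)$ and the integral is $\theta(1)-\theta(0)$, which lies in $\Z$ since $e^{2\pi i\theta(1)}=e^{2\pi i\theta(0)}$. The main obstacle, though entirely standard, is justifying the passage from $\Tr(g_t^{-1}g_t')$ to $\frac{d}{dt}\log\det g_t$ and the existence of the continuous logarithmic lift; one should take care that $\log\det$ is only defined up to a multiple of $2\pi i$, which is precisely why the answer is merely integer-valued rather than zero, and so the argument must be phrased through the lift $\theta$ rather than through a global branch of the logarithm.
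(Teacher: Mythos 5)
Your proof is correct, and it reaches the paper's conclusion by the same underlying idea --- reduce the trace integral to the winding number of a loop in $U(1)$ and then invoke a logarithmic lift through the covering map $\R \to U(1)$ --- but the reduction step is implemented differently. The paper splits $U(n) \cong SU(n)\rtimes U(1)$, writes $g_t = (n_t, h_t)$, and uses that $\mathfrak{su}(n)$ consists of trace-zero matrices to conclude $\Tr(g_t^{-1}g_t') = h_t^{-1}h_t'$; your route instead applies Jacobi's formula $\frac{d}{dt}\det g_t = \det(g_t)\,\Tr(g_t^{-1}g_t')$ to pass directly to the loop $t \mapsto \det g_t$. These are two presentations of the same fact (the paper's $h_t$ is essentially $\det g_t$ embedded in $U(n)$, and the trace-zero property of $\mathfrak{su}(n)$ is Jacobi's formula in disguise), but yours is arguably more economical, since it avoids setting up the semidirect-product structure, while the paper's version keeps everything inside matrix groups and makes the vanishing of the $SU(n)$ contribution explicit. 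For the final step the paper constructs the lift by hand, checking that $k(s) = h_s^{-1}e^{\int_0^s h_t^{-1}h_t'\,dt}$ satisfies $k(0)=1$ and $k'(s)=0$, whereas you cite the universal covering property; both are rigorous, and you correctly flag the one genuine pitfall, namely that no global branch of $\log\det$ exists, which is exactly why the answer is an integer rather than zero. One cosmetic remark: your argument only uses $g_0 = g_1$, so it proves the statement for free loops, of which the based case needed in the paper is a special instance.
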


\begin{proof} Choose $n$ such that $g_t \in \Omb U(n)$. We have 
\[
 SU(n)  \rtimes U(1) \cong  U(n) 
 \]
Under the isomorphism $(n, h) \mapsto nh$, the semi-direct group structure is given by 
\[
(n_1, h_1) \cdot (n_2, h_2) = (n_1 h_1n_2 h_1^{-1} , h_1h_2) \quad \quad (n , h)^{-1} = (h^{-1}n^{-1}h,h^{-1})
\]
Therefore, if $g_t = (n_t , h_t)$ under the isomorphism above, we have
\[
g_t^{-1} g_t' = h_t^{-1} n_t^{-1} n_t' h_t +  h_t^{-1} h_t' 
\]
where $g_t^{-1} g_t' \in \mathfrak{u}(n)$ and $ n_t^{-1} n_t' \in \mathfrak{su}(n)$ and   
$h_t^{-1} h_t' \in \mathfrak{u}(1) = i \R$. Since $\mathfrak{su}(n)$ consists of trace zero matrices, 
$\Tr (h_t^{-1} n_t^{-1} n_t' h_t) = 0$, so 
it's enough to show 
\[
\frac { 1 }{ 2 \pi i } \int_0^1 h_t^{-1} h_t' dt \in \Z.
\]
For any path $h_t : [0,1] \to  U(1)$ satisfying $h(0) = 1$ we have $h(s) = e^{\int_0^s h_t^{-1} h_t' dt}$ 
(\emph{i.e.} the integral $\int_0^s h_t^{-1} h_t' dt$ is the logarithmic lift of $h_t$ by the covering exponential map), since
\[
k(s) = h_s^{-1} e^{\int_0^s h_t^{-1} h_t' dt}
\]
satisfies $k(0) = 1$ and $k'(s) = 0$ for all $s$. So, for $h(t) \in \Omb U(1)$, 
\[
e^{\int_0^1 h_t^{-1} h_t' dt }= h(1)=1, \quad\text{so that } \int_0^1 h_t^{-1} h_t' dt \in 2 \pi i \Z.
\]
\end{proof}

If we move $g_t$ through a smooth family $g_t^s$ with fixed endpoints, the form $CS(g^s_t)$ changes only by an exact form, as we now show.
For $g_t^s : M \times I \times I \to U$, where $(s,t) \in I \times I$, let $H(g_t^s)\in \Om^{\textrm{odd}}(M;\R)$ be given by
\[
H(g_t^s) = \int_{I \times I} Ch(g_t^s) 
\]
where  $Ch(g_t^s) \in \Om^{\textrm{odd}}(M \times I \times I;\R)$ and $\int_{I\times I}$ is the integration along the fiber
\[
\xymatrix{
M \times I \times I \ar[r]^-{g_t^s } \ar[d]_-{\int_{I\times I}} & U\\
M & 
}
\]
By Stokes' theorem we have
\[
d H(g_t^s) =  \int_{\partial (I \times I) } Ch(g_t) + \int_{I\times I} d Ch(g_t) = 
CS(g_t^1) - CS(g_t^0) - CS(g_1^s) + CS(g_0^s)
\]
since $d Ch(g_t) = 0$.  In particular we have

\begin{prop} \label{prop:CSuptoExact}
Let $g_t^s : M \times I \times I \to U$ be a smooth map which is constant along $t=0$ and $t=1$, i.e. 
$g_0^s = g_0^0$ and $g_1^s= g_1^0$ for all $s$. Then 
\begin{multline*}
H(g_t^s) = \Tr \sum_{\stackrel{n \geq 0}{\stackrel{1 \leq i,j \leq 2n+1}{i \neq j}}}
\frac{(-1)^n }{(2 \pi i)^{n+1}} \frac{n!}{(2n+1)!}  \\
\cdot \int_0^1\int_0^1 \overbrace{
 g^{-1}dg \cdots  \underbrace{(g^{-1} \frac{\partial}{\partial t} g )}_{i^{\text{th}}} 
\cdots  \underbrace{ (g^{-1}  \frac{\partial}{\partial s} g) }_{j^{\text{th}}}   \cdots g^{-1}dg }^{\text{$2n+1$ terms}} dtds
\end{multline*}
where $g = g_t^s$, and satisfies
\[
dH(g_t^s)  = CS(g_t^1) - CS(g_t^0).
\]
\end{prop}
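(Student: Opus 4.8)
The statement bundles together two claims: the closed formula for $H(g_t^s)$ and the transgression identity $dH(g_t^s)=CS(g_t^1)-CS(g_t^0)$. I would dispatch the identity first, since the work is already done in the Stokes' theorem computation displayed immediately before the proposition, namely
\[
dH(g_t^s)=CS(g_t^1)-CS(g_t^0)-CS(g_1^s)+CS(g_0^s).
\]
It then suffices to observe that the two ``vertical'' boundary terms vanish. By hypothesis the paths $s\mapsto g_0^s$ and $s\mapsto g_1^s$ are constant, so $g^{-1}\partial_s g\equiv 0$ along $t=0$ and $t=1$; Lemma \ref{oddCS}, whose integrand carries an explicit factor $g^{-1}\partial_s g$, then gives $CS(g_0^s)=CS(g_1^s)=0$, leaving $dH(g_t^s)=CS(g_t^1)-CS(g_t^0)$.

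For the explicit formula I would start from Lemma \ref{lem:Ch-from-Theta}, now interpreting $d$ as the full de Rham differential on $M\times I\times I$ and $g=g_t^s$ as a map into some fixed $U(N)$ (available since $M\times I\times I$ is compact), so that
\[
Ch(g_t^s)=\Tr\sum_{n\geq 0}\frac{(-1)^n}{(2\pi i)^{n+1}}\frac{n!}{(2n+1)!}\,(g^{-1}dg)^{2n+1}.
\]
The first step is to decompose the Maurer--Cartan form according to the three factors of the domain,
\[
g^{-1}dg=g^{-1}d_M g+(g^{-1}\partial_t g)\,dt+(g^{-1}\partial_s g)\,ds,
\]
with $d_M$ the differential in the $M$-directions. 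Since $H(g_t^s)=\int_{I\times I}Ch(g_t^s)$ is integration along the fiber $I\times I$, only the component of $(g^{-1}dg)^{2n+1}$ proportional to $dt\wedge ds$ survives, and it is this component that is then integrated over the square.

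The core of the proof is therefore the expansion of $(g^{-1}dg)^{2n+1}$, retaining exactly the monomials with one factor $(g^{-1}\partial_t g)\,dt$ and one factor $(g^{-1}\partial_s g)\,ds$; these are indexed by the slot $i$ holding $g^{-1}\partial_t g$, the slot $j$ holding $g^{-1}\partial_s g$ (with $i\neq j$), and $2n-1$ slots holding $g^{-1}d_M g$. To read off the coefficient of $dt\wedge ds$ one must commute the scalar one-forms $dt$ and $ds$ out of each monomial past the intervening $g^{-1}d_M g$ factors. I expect this Koszul-sign bookkeeping, whose outcome depends on the relative order of $i$ and $j$, together with the fixing of the fiber orientation of the square, to be the only genuine obstacle; pulling the scalar $dt\,ds$ through the trace and rewriting the fiber integral as $\int_0^1\int_0^1(\cdots)\,dt\,ds$ by Fubini is then routine. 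Carefully assembling these signs and invoking the cyclic invariance of $\Tr$ to normalize the ordered matrix products packages the result into the stated double sum over $n$ and over pairs $i\neq j$.
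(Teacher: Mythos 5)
Your proposal is correct and takes essentially the same route as the paper: the paper also deduces the identity $dH(g_t^s)=CS(g_t^1)-CS(g_t^0)$ as a special case of the Stokes' computation displayed just before the proposition (with the vertical terms $CS(g_0^s)$, $CS(g_1^s)$ vanishing because those paths are constant), and obtains the formula from Lemma \ref{lem:Ch-from-Theta} together with the decomposition $g^*(\omega)=(g^{-1}\tfrac{\partial}{\partial t}g)\,dt+(g^{-1}\tfrac{\partial}{\partial s}g)\,ds+g^{-1}dg$ and the definition of integration along the fiber. If anything, you spell out more than the paper does, which compresses the boundary-term vanishing and the Koszul-sign/fiber-integration bookkeeping you flag into a single sentence.
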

\begin{proof}
The second claim is a special case of what was shown above. The first follows from Lemma \ref{lem:Ch-from-Theta}, the formula for integration along the fiber, and the fact that $g^*(\omega) = (g^{-1} \frac{\partial}{\partial t} g )dt +  (g^{-1} \frac{\partial}{\partial s} g )ds + (g^{-1} dg )$, where $g = g_t^s$, since $\omega$ is a $1$-form.
\end{proof}

The following theorem shows that the collection of even forms $\{CS(g_t)\}$,  for $g_t: M \times S^1 \to U$ satisfying $g_0=g_1=1$, are the same modulo exact forms, as the set of Chern forms $\{Ch(\nabla)\}$ for some connection $\nabla$.
As explained in Remark \ref{rmk:CSCh}, this is a version of Bott periodicity concerning closed differential forms.

\begin{thm} \label{thm:CSCh}
Let $M$ be a closed manifold. For every bundle with connection $(E,\nabla)$, there is a map $g : M \to \Omb U$ such that $CS(g_t) = Ch(\nabla)$. Conversely, for every map $g : M\to\Omb U$ there is a bundle with connection $(E,\nabla)$ such that $Ch(\nabla) \equiv CS(g_t)$  \emph{modulo exact forms}.
\end{thm}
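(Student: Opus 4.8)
The plan is to handle the two directions separately: the forward direction by attaching an explicit ``Bott loop'' to the bundle and computing its Chern--Simons form on the nose, and the converse by a cohomological argument via Bott periodicity, since there only an equality modulo exact forms is asserted. The computational heart is a single identity for $CS$ of the Bott loop, and the main obstacle is upgrading the forward statement from an equality modulo exact forms to an equality on the nose.

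For the forward direction I would first embed $E$ as the image of a smooth family of hermitian projections $P \colon M \to \End(\C^N)$, $P^2 = P = P^*$, using compactness of $M$, and let $\nabla_0 = P \circ \d$ be the associated Grassmann connection, whose curvature on $\im P$ is $F = P\,\d P\,\d P$. I would then compute $CS(g_t)$ for the based loop $g_t = e^{2\pi i t P} = \id + (e^{2\pi i t}-1)P$, which satisfies $g_0 = g_1 = \id$. Writing $\lambda = e^{2\pi i t}$ one finds $g_t^{-1} g_t' = 2\pi i\,P$ and $g_t^{-1} \d g_t = (1-\lambda^{-1})\,P\,\d P\,(1-P) + (\lambda-1)\,(1-P)\,\d P\,P$; since $P\,\d P\,P = 0$ both summands square to zero, so that $(g_t^{-1}\d g_t)^{2n}$ paired against $P$ under the trace collapses to the scalar $(\lambda+\lambda^{-1}-2)^n = (-4\sin^2\pi t)^n$ times $F^n$. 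The only real work is then the Wallis integral $\int_0^1(-4\sin^2\pi t)^n\,\d t = (-1)^n\binom{2n}{n}$, after which the binomial factors in Lemma \ref{oddCS} cancel exactly to leave the $n$-th term equal to $\frac1{n!}\Tr\big((\tfrac{1}{2\pi i}F)^n\big)$. Summing over $n$ gives $CS(e^{2\pi i t P}) = \Tr\big(e^{\frac{1}{2\pi i}F}\big) = Ch(\nabla_0)$, and this is precisely where the normalization of the footnote is forced.

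To pass from $\nabla_0$ to an arbitrary connection $\nabla$ on $E$, I would write $Ch(\nabla) - Ch(\nabla_0) = \d\gamma$ with $\gamma = CS(\nabla^s)$ the odd Chern--Simons form of a path of connections, by \eqref{eq:CS(nabla^s)}. It then suffices to realize the exact even form $\d\gamma$ as $CS$ of a based loop and to compose, using additivity of $CS$ under composition (Proposition \ref{prop:CSprop}). Here I would apply Corollary \ref{cor:ChSurjOnExact} on $M \times S^1$: taking $\Xi = \d t \wedge \pi^*\gamma$, so that $\int_{S^1}\Xi = \gamma$, the exact odd form $\d\Xi$ is $Ch(G)$ for some $G \in Map^o(M\times S^1,\U)$, and $\int_{S^1}Ch(G) = \d\gamma$. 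Since $G$ is a free rather than a based loop, I would correct its basepoint by block-summing with the constant map $G_0^{-1}$ (which does not alter $CS$, being $t$-independent, by Lemma \ref{oddCS} and Proposition \ref{prop:CSprop}) and then concatenate with a path from $\id$ to $G_0 \oplus G_0^{-1}$ and its reverse, whose $CS$-contributions cancel by the sign change under reversal evident from Lemma \ref{oddCS}. The resulting based loop $h$ has $CS(h) = \d\gamma$, so $g = (e^{2\pi i t P}) * h$ satisfies $CS(g_t) = Ch(\nabla_0) + \d\gamma = Ch(\nabla)$.

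For the converse I would argue in cohomology, where only equality modulo exact forms is claimed. A based loop $g \colon M \to \Omb U$ represents a class in $[M,\Omb U] \cong K^0(M)$ by Bott periodicity; call the corresponding virtual bundle $E_g$. By Proposition \ref{prop:CSuptoExact} the de Rham class $[CS(g_t)]$ depends only on the based-homotopy class of $g$, and by naturality it may be computed on the Bott generators $e^{2\pi i t P}$, for which the forward computation gives $[CS(g_t)] = \Tr\big(e^{\frac{1}{2\pi i}F}\big) = ch(E_g)$, the degree-zero part matching the winding number already identified with the rank. Choosing any bundle with connection $(E,\nabla)$ representing $E_g$ (after stabilizing by trivial summands so that $E$ is honest), the closed form $Ch(\nabla)$ lies in the class $ch(E_g) = [CS(g_t)]$, whence $Ch(\nabla) - CS(g_t)$ is exact. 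The main obstacle throughout is the forward exactness: both the precise constant-matching in the Bott-loop computation and the based-loop realization of the residual exact form require care, whereas the converse is essentially formal once that computation is in hand.
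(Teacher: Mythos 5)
Your proposal is correct, and its forward half takes a genuinely different route from the paper's. The paper invokes the Narasimhan--Ramanan theorem on universal connections \cite{NR}, which gives strictly more than your embedding step: not merely that $E$ is the image of a projection family $P$, but that the \emph{given} connection $\nabla$ is itself realized as $\nabla = P\circ \d$ for a suitable $P$. After that, the single computation $CS(e^{2\pi i Pt}) = \Tr\bigl(e^{\frac{1}{2\pi i}P(\d P)^2}\bigr)$ --- identical to yours, including the collapse via $P\,\d P\,P=0$, the factor $(-4\sin^2\pi t)^n$, and the Wallis integral --- already yields $CS(g_t)=Ch(\nabla)$ on the nose for arbitrary $\nabla$, with no correction term. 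You instead realize only the Grassmann connection $\nabla_0$ and repair the discrepancy $Ch(\nabla)-Ch(\nabla_0)=\d\gamma$ by producing a based loop $h$ with $CS(h)=\d\gamma$ exactly: Corollary \ref{cor:ChSurjOnExact} applied to the exact odd form $\d(\d t\wedge\pi^*\gamma)$ on $M\times S^1$, a basepoint correction by block sum with the constant map $G_0^{-1}$ (which has vanishing $CS$ by Lemma \ref{oddCS}), and conjugating paths whose contributions cancel under time reversal. All of these steps are legitimate: Corollary \ref{cor:ChSurjOnExact} is proved in Section \ref{SEC:odd-chern-form} independently of the theorem, so there is no circularity, and it gives equality of forms rather than of classes, so your final equality is also on the nose (up to the orientation sign in $\int_{S^1}\d\Xi=\pm\,\d\gamma$, which you can absorb into $\gamma$ or into replacing $h$ by $h^{-1}$). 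The trade-off: the paper's appeal to \cite{NR} is shorter but imports a nontrivial external theorem; your route uses only the elementary embedding of $E$ into a trivial bundle plus machinery already established in the paper.

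Your converse is essentially the paper's argument compressed. The paper's technical effort lands exactly on the step you summarize as ``by naturality it may be computed on the Bott generators'': it needs the specific geometric model in which $BU\times\Z\to\Omb U$, $P\mapsto e^{2\pi i Pt}$, is a homotopy equivalence, a homotopy inverse via Whitehead's theorem, the fact that a map from compact $M$ into $BU\times\Z$ lands in a finite Grassmannian $G_{k,n}$, and Whitney approximation to make the endpoint-fixed homotopy \emph{smooth} --- the last point matters because Proposition \ref{prop:CSuptoExact} requires a smooth $g_t^s$. One small correction: your parenthetical ``after stabilizing by trivial summands so that $E$ is honest'' cannot be right as stated, since block-summing with a trivial $\C^k$ shifts the degree-zero part of $Ch(\nabla)$ by $k$ while leaving $CS(g_t)$ unchanged. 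The issue disappears if, as in the paper, you take $(E,\nabla)=(\im P,\, P\circ \d)$ directly from the smooth projection $P$ representing the homotopy class of $g$, whose rank then automatically matches the winding number of $CS(g_t)$ in degree zero.
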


\begin{proof}
Let $(E,\nabla)$ be a bundle with connection over $M$, whose Chern character is $Ch(\nabla)$. By a theorem of Narasimhan and Ramanan \cite{NR}
there is an isomorphism of $E$ onto a subbundle of a trivial $\C^N$-bundle, such that 
$\nabla$ is obtained as the restriction of the trivial connection $d$ on this trivial $\C^N$-bundle. 
Let $P: M \to \mathcal{M}_{N \times N}$ be the projection operator whose image defines this bundle, so that 
the connection on $Im(P)$ is given by $\nabla (s) = P( ds)$. A calculation shows the 
curvature of $\nabla$ is given  by $R(s) = P(dP)^2(s)$, so that
\[ 
Ch(\nabla)=\Tr (e^{\frac{1}{2 \pi i} R})=\Tr \sum_{n\geq 0} \frac{1}{(2 \pi i)^{n}} \frac{1}{n!} P(dP)^{2n}
\]

We define $g_t: M \times S^1 \to U$ by 
\[
g_t = e^{ 2 \pi i P t} = Id  + (e^{2 \pi i t} - 1) P 
\]
and show that $CS(g_t) = Ch(\nabla)$. We have
\[
g_t^{-1} g'_t  = 2 \pi i P
\]
and
\begin{eqnarray*}
g_t^{-1} d g_t &=& \left( Id + (e^{-2 \pi i t} - 1) P \right) \left( ( e^{2 \pi i t} - 1) dP \right)
\\
&=& ( e^{2 \pi i t} - 1) dP +( e^{2 \pi i t} - 1)( e^{-2 \pi i t} - 1) P dP. 
\end{eqnarray*}
Since $P^2=P$, we have $dP P+PdP=dP$, so $PdP=dP-dP P=dP P^\perp$, so $(P dP )^2= 0$ and so $d P P dP = P^\perp (dP)^2$. Therefore, 
\begin{eqnarray*}
(g_t^{-1} d g_t )^2 &=& ( e^{2 \pi i t} - 1)^2 (dP)^2 + ( e^{2 \pi i t} - 1)^2(e^{-2 \pi i t} - 1)\left( P (dP)^2 + P^\perp (dP)^2 \right) \\
&=& ( e^{2 \pi i t} - 1)^2 e^{-2\pi i t} (dP)^2 =-4 \sin^2 ( \pi t) (dP)^2 ,
\end{eqnarray*}
so that
\begin{equation}\label{EQU:(gdg)2n}
g_t^{-1} g'_t (g_t^{-1} d g_t )^{2n} = 2 \pi i (-4)^n \sin^{2n} ( \pi t) P (dP)^{2n}.
\end{equation}
Using the fact that
\[
\int_0^1 \sin^{2n} ( \pi t) dt = \frac{1}{\pi}\int_0^\pi \sin^{2n} t dt = \frac{1}{4^n}  \binom{2n}{n}
\]
we get 
\[
\int_0^1  g_t^{-1} g'_t\cdot (g_t^{-1} d g_t )^{2n} dt =  (-1)^n 2 \pi i \binom{2n}{n} P (dP)^{2n}
\]
and thus from Lemma \ref{oddCS}:
\begin{multline}\label{EQU:CS(gt)=Ch(nabla)}
CS(g_t) =\Tr \sum_{n\geq 0} \frac{(-1)^n}{(2 \pi i)^{n+1}} \frac{n!}{(2n)!} \int_0^1 (g_t^{-1} g_t') \cdot (g_t^{-1} dg_t)^{2n} dt \\
= \Tr\sum_{n\geq 0} \frac{1}{(2 \pi i)^{n}} \frac 1 {n!} P (dP)^{2n} = Ch(\nabla).\hspace{1.67in}
 \end{multline}
 
We now show the second statement in the theorem. Let $g : M \to \Omb U$  be given, that is $g_t : M \times S^1 \to U$. It suffices to show there is a smooth homotopy $g_t^s : M \times S^1\times I \to U$ such that $g_0^s=g_1^s=1$, $g_t^0 = g_t $, and $g_t^1(x) = \exp (2 \pi i P(x) t)$  where $P: M \to \mathcal{M}_{n \times n}$ is some smooth matrix valued function on $M$ and each $P(x)$ is a projection. Then, by Proposition \ref{prop:CSuptoExact}, $CS(g^1) - CS(g^0)$ is exact. The statement then follows from the previous calculation since $CS(g^1) = Ch(\nabla)$ where $\nabla$ is the connection on the bundle determined by $P$.

We will use ideas from proofs of Bott periodicity \cite{Mc, AP, B}. We recall that those proofs identify a geometric model for the space $BU \times \Z$ as follows. For a fixed Hilbert space $H$ one can consider the contractible space $E$ of  hermitian operators with eigenvalues in $[0,1]$. There is a map from $E$ to $U$ given by $A \mapsto \exp(2 \pi i A )$. 
After stabilization the pre-image of $1 \in \U$ is shown to be homotopy equivalent to $BU \times \Z$, see e.g. \cite{B}. Note that  $\exp(2 \pi i A ) =1$
if and only if the eigenvalues of $A$ are in $\{0,1\}$, i.e. $A$ is the projection onto a finite dimensional subspace.
Conversely, given such a subspace, there is a unique projection operator onto the given finite dimensional subspace.

Since $E$ is contractible via the path $tP$, it follows that the map $BU \times \Z \to \Omb U$,  defined on representatives $P$ by $\exp:P \mapsto \exp(2 \pi i P t)$, is a homotopy equivalence. 
By Whitehead's theorem, there is a map $\phi: \Omb U \to BU \times \Z$ which is a  homotopy inverse to this map.

The classifying space $BU \times \Z$ may be constructed as a limit of the finite dimensional smooth Grassmann manifolds $G_{k,n}$. In fact, there is a cell structure for $BU \times \Z$ so that any finite subcomplex is contained in some $G_{k,n}$.
This implies that any continuous map from a compact space into $BU \times \Z$ has image in $G_{k,n}$ for some $k$ and $n$.

It follows that the composition $\phi \circ g_t : M \to \Omb U \to BU \times \Z$ has image in the smooth manifold $G_{k,n}$ for some $k$ and $n$.  
By Whitney's approximation theorem, the continuous map $\phi \circ g_t$ is homotopic to a smooth map $P: M \to G_{k,n}$, where each $P(x)\in G_{k,n}$ is regarded as a projection onto a $k$-dimensional subspace. Then $g_t$ is homotopic to $\exp( 2 \pi i \phi(g_t))$, which is homotopic to $\exp(2 \pi i P t)$. Again, by compactness and Whitney's approximation theorem, we can arrange for the homotopy $g_t^s$ between $g_t^0 = g_t$ and 
$g_t^1= \exp(2 \pi i P t)$ to be smooth. This completes the proof.
\end{proof}

\begin{rmk} \label{rmk:CSCh}
An alternative interpretation of the last theorem is as follows. There is a canonical form on $BU \times \Z$ giving the universal Chern form, and a canonical form on $\Om U$, which is given by pullback of $\Theta$ on $\U$ along the evaluation map $\Omb U \times S^1 \to U$, and integrating along the fiber $S^1$. Furthermore, there is a map $BU \times \Z \to \Omb U$ given as an exponentiated loop, based on the specific Bott periodicity map in \cite{AP} and \cite{B}. The theorem says the pullback of the above form on $\Omb U$ along this map equals the universal Chern form on $BU \times \Z$, on the nose,
in the sense that this holds along any compact smooth plot $M \to BU \times Z$ (in the sense of Chen \cite{C}) and its induced plot $M \to \Omb U$. Consequently, the analogous pullback statement holds in the other direction, modulo exact, along any homotopy inverse of this map.
 \end{rmk}

The following important technical lemma will be used below to show that certain equivalence classes of elements of $Map(M, \U)$, to be defined later using the Chern Simons forms, have well defined abelian group structures.

\begin{lem} \label{lem:CSzero}
For any $g, h\in Map(M, \U )$, there exists a path $f_t \in Map(M \times I, \U )$ such that 
\[ 
f(0)= g \oplus h, \quad \quad f(\pi/2) = h \oplus g, \quad \textrm{and} \quad  CS(f_t) =0.
\]
Also, for any $g \in Map(M, \U )$, there exists a path $k_t \in Map(M \times I, \U  )$ such that 
\[ 
k(0)= g \oplus g^{-1}, \quad \quad k(\pi/2) = id, \quad \textrm{and} \quad  CS(k_t) =0.
\]
 \end{lem}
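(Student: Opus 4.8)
The plan is to realize both paths by conjugating with the standard block rotation and then to reduce the vanishing of $CS$ to one elementary trace identity via Lemma~\ref{oddCS}: the trace of a $2\times2$ block anti-diagonal matrix times a block diagonal matrix is zero.

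For the first statement, take $g,h\in U(n)$ and let $\rho_t=\left(\begin{smallmatrix}\cos t\,I&-\sin t\,I\\ \sin t\,I&\cos t\,I\end{smallmatrix}\right)\in U(2n)$, setting $f_t=\rho_t(g\oplus h)\rho_t^{-1}$ for $t\in[0,\pi/2]$ (reparametrized to $I$, which does not change $CS$). Since $\rho_0=\id$ and $\rho_{\pi/2}$ is the block swap, $f(0)=g\oplus h$ and $f(\pi/2)=h\oplus g$. Because $\rho_t$ is independent of $M$, a short computation gives $f_t^{-1}df_t=\rho_t\big(A^{-1}dA\big)\rho_t^{-1}$ and $f_t^{-1}f_t'=\rho_t\big(A^{-1}\xi A-\xi\big)\rho_t^{-1}$, where $A=g\oplus h$ and $\xi=\rho_t^{-1}\rho_t'=\left(\begin{smallmatrix}0&-I\\ I&0\end{smallmatrix}\right)$ is constant in $t$. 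By conjugation-invariance of the trace the $\rho_t$'s cancel, so each summand of Lemma~\ref{oddCS} reduces to $\Tr\big((A^{-1}\xi A-\xi)(A^{-1}dA)^{2n}\big)$. Now $A^{-1}\xi A-\xi=\left(\begin{smallmatrix}0&I-g^{-1}h\\ h^{-1}g-I&0\end{smallmatrix}\right)$ is block anti-diagonal, while $A^{-1}dA=(g^{-1}dg)\oplus(h^{-1}dh)$, hence $(A^{-1}dA)^{2n}$, is block diagonal; their product is anti-diagonal and so traceless, giving $CS(f_t)=0$.

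For the second statement I would use the standard null-homotopy of a commutator. With $G=g\oplus I$, the path $k_t=G\,\rho_t\,G^{-1}\,\rho_t^{-1}$ runs from $[G,\rho_0]=\id$ to $[G,\rho_{\pi/2}]=g\oplus g^{-1}$; reversing the parameter (which leaves the vanishing of $CS$ untouched) produces $k(0)=g\oplus g^{-1}$ and $k(\pi/2)=\id$. The same bookkeeping yields $k_t^{-1}k_t'=\rho_t\,\Xi\,\rho_t^{-1}$ with $\Xi=G\xi G^{-1}-\xi=\left(\begin{smallmatrix}0&I-g\\ g^{-1}-I&0\end{smallmatrix}\right)$ block anti-diagonal, and $k_t^{-1}dk_t=\rho_t\,M_0\,\rho_t^{-1}$ for an explicit $M_0$, so each summand becomes $\Tr(\Xi M_0^{2n})$. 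Here $M_0$ is genuinely block-full, so the argument of the first part does not apply directly; this is the crux. The observation that saves it is that $M_0=G N G^{-1}$ with $N=\sigma\otimes a$, where $a=g^{-1}dg$ and the scalar matrix $\sigma=\left(\begin{smallmatrix}-\sin^2 t&-\sin t\cos t\\ -\sin t\cos t&\sin^2 t\end{smallmatrix}\right)$ satisfies $\sigma^2=\sin^2 t\cdot I$. Consequently $N^{2n}=\sigma^{2n}\otimes a^{2n}=\sin^{2n}t\,(a^{2n}\oplus a^{2n})$ is block diagonal, and $\Tr(\Xi M_0^{2n})=\Tr\big((G^{-1}\Xi G)\,N^{2n}\big)$ with $G^{-1}\Xi G=\left(\begin{smallmatrix}0&g^{-1}-I\\ I-g&0\end{smallmatrix}\right)$ again block anti-diagonal; the anti-diagonal-times-diagonal trace vanishes exactly as before.

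The main obstacle is precisely this last point: although the logarithmic derivatives of the commutator path are block-full, the \emph{even} powers that occur in the Chern–Simons integrand collapse to block-diagonal form because of the relation $\sigma^2=\sin^2 t\cdot I$. Once this is noticed, both vanishing statements follow from the single principle above, and the remaining details—smoothness of $f_t,k_t$, the reparametrization from $[0,\pi/2]$ to $I$, and independence of the finite size $n$—are routine.
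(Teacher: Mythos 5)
Your proposal is correct and follows essentially the same route as the paper: conjugation by the one-parameter block rotation, reduction via Lemma \ref{oddCS} to pointwise trace identities, and the key collapse of the even powers of the logarithmic derivative to block-diagonal form (your identity $\sigma^2=\sin^2 t\cdot I$ is the time-reversed counterpart of the paper's observation that the analogous matrix squares to $\cos^2 t\cdot I$), after which the trace of a block anti-diagonal times a block diagonal matrix vanishes. Your second path $G\rho_t G^{-1}\rho_t^{-1}$ is, up to reversal, the paper's path $GX(t)HX(t)^{-1}$ with $H=1\oplus g^{-1}=\rho_{\pi/2}G^{-1}\rho_{\pi/2}^{-1}$, so the differences are purely organizational.
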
 

\begin{proof}
We may assume that $g$ and $h$ are given by elements in $Map(M, U(n))$ for a common integer $n$, after possibly stabilizing one of the two maps. To prove both statements we will use the path $X:[0,\pi/2]\to U(2n)$, 
\[
X(t) = 
\begin{bmatrix}
\cos t & \sin t  \\
-\sin t   & \cos t 
\end{bmatrix},
\]
where each entry is an $n$-by-$n$ matrix, given by multiplying it with the $n$-dimensional identity matrix. Using this, we have
\[
J = X'(t) X(t)^{-1} = X(t)^{-1} X'(t) =
\begin{bmatrix}
0 & 1 \\
-1 & 0
\end{bmatrix}.
\]

For the first statement, consider the path $f_t\in Map(M\times [0,\pi/2], U(2n))$, 
\[
f_t  = 
X(t)FX(t)^{-1}
\]
where 
\[
F = \begin{bmatrix}
g & 0 \\
0 & h
\end{bmatrix}
\]
so that $f(0) = g \oplus h$ and $f(\pi/2) = h \oplus g$. Then, using the fact that $\frac{ \partial}{ \partial t} (X(t)^{-1}) = -X(t)^{-1}X'(t) X(t)^{-1}$, we get
\begin{eqnarray*}
f_t^{-1} f_t' &=& X(t)F^{-1}X(t)^{-1}( X'(t)FX(t)^{-1} - X(t)FX(t)^{-1}X'(t) X(t)^{-1}) \\
&=& X(t)F^{-1}JFX(t)^{-1} - J 
\end{eqnarray*}
and
\[
f_t^{-1} df_t = X(t)F^{-1}X(t)^{-1}( X(t)dFX(t)^{-1} ) =  X(t)F^{-1}dFX(t)^{-1}.
\]
So,
\begin{eqnarray*}
\Tr \left( f_t^{-1} f_t' (f^{-1} df_t)^{2n} \right) &=& \Tr \left( (X(t)F^{-1}JFX(t)^{-1} - J)(X(t)F^{-1}dFX(t)^{-1} )^{2n} \right) \\
& = & \Tr \left( X(t)F^{-1}JF (F^{-1}dF )^{2n} X(t)^{-1}\right. \\
&& \hspace{1.5in}\left.
- JX(t)(F^{-1}dF)^{2n}X(t)^{-1} \right)  \\
&=&  \Tr \left( J(dF F^{-1})^{2n} \right)  - \Tr \left( J(F^{-1}dF)^{2n} \right) = 0
\end{eqnarray*}
where in the second to last step we used that trace is invariant and $X^{-1}JX(t) = J$, and the final step we note the matrices are skew-symmetric. Using the explicit formula for $CS(f_t)$ from Lemma \ref{oddCS}, this proves the first statement.

For the second statement, consider the path $k_t\in Map(M\times [0,\pi/2], U(2n))$, 
\[
k_t  = G X(t) H X(t)^{-1},
\]
where 
\[
G = \begin{bmatrix}
g & 0 \\
0 & 1
\end{bmatrix}
\quad \quad
H = \begin{bmatrix}
1 & 0 \\
0 & g^{-1}
\end{bmatrix}.
\]
Then $k(0) = g \oplus g^{-1}$ and $k(\pi/2) = I_{2n}$. We need to show that $CS(k_t)=0$.

Using again that $\frac{ \partial}{ \partial t} (X(t)^{-1}) = -X(t)^{-1}X'(t) X(t)^{-1}$,  we calculate
\begin{eqnarray*}
k_t^{-1} k_t' &=& X(t)H^{-1}X(t)^{-1} G^{-1} \left( G X'(t)H X(t)^{-1}\right. \\
&& \hspace{1.5in}\left. - G X(t) H X(t)^{-1}X'(t) X(t)^{-1}\right) \\
&=&  X(t)H^{-1}JHX(t)^{-1} - J 
\end{eqnarray*}
and
\begin{eqnarray*}
k_t^{-1} dk_t &=& X(t)H^{-1}X(t)^{-1}G^{-1} \left(  dGX(t)H X(t)^{-1}  + G X(t) dH X(t)^{-1} \right) \\
&=&   X(t)H^{-1}X(t)^{-1}G^{-1} dGX(t)H X(t)^{-1}  + X(t)H^{-1} dH X(t)^{-1} 
\end{eqnarray*}
which implies
\begin{eqnarray*}
(k_t^{-1} dk_t)^{2n} &=&   X(t)H^{-1}\left( X(t)^{-1}G^{-1} dGX(t) + dH H^{-1} \right)^{2n} HX(t)^{-1} 
\end{eqnarray*}
We will evaluate this expression by showing that 
\[
\left( X(t)^{-1}G^{-1} dGX(t) + dH H^{-1} \right)^{2n} 
=
(g^{-1}dg)^{2n}
\begin{bmatrix}
\cos^{2n}(t) & 0 \\
0 & \cos^{2n}(t)
\end{bmatrix}.
\]
Indeed, one calculates that
\[
X(t)^{-1}G^{-1} dGX(t) 
=
g^{-1}dg 
\begin{bmatrix}
\cos^2(t) & \cos(t) \sin (t) \\
\cos(t)\sin (t)   & \sin^2(t) 
\end{bmatrix}
\]
and, since $d(g^{-1})=-g^{-1}dgg^{-1}$, 
\[
dH H^{-1} 
=
g^{-1}dg 
\begin{bmatrix}
0 & 0 \\
0  &-1
\end{bmatrix},
\]
which gives 
\[
X(t)^{-1}G^{-1} dGX(t) + dH H^{-1} 
=
(g^{-1}dg)
\begin{bmatrix}
\cos^2(t) & \cos(t) \sin (t) \\
\cos(t) \sin (t)  & -\cos^2(t) 
\end{bmatrix}.
\]
Since
\[
\begin{bmatrix}
\cos^2(t) & \cos(t) \sin (t) \\
\cos(t) \sin (t)  & -\cos^2(t) 
\end{bmatrix}
^2 
=
\begin{bmatrix}
\cos^2(t) & 0 \\
0 & \cos^2(t) 
\end{bmatrix}
\]
it follows that 
\[
\left( X(t)^{-1}G^{-1} dGX(t) + dH H^{-1} \right)^{2n} 
=
(g^{-1}dg)^{2n}
\begin{bmatrix}
\cos^{2n}(t) & 0 \\
0 & \cos^{2n}(t)
\end{bmatrix}.
\]

Finally we have
\begin{multline*}
\Tr \left( k_t^{-1} k_t' (k^{-1} dk_t)^{2n} \right) 
= \Tr \Bigg( \left( X(t)H^{-1}JHX(t)^{-1} - J \right)  
\\
 \hspace{1.2in}
\cdot \left( X(t)H^{-1} 
(g^{-1}dg)^{2n}
\begin{bmatrix}
\cos^{2n}(t) & 0 \\
0 & \cos^{2n}(t)
\end{bmatrix} HX(t)^{-1} \right) \Bigg) 
\\
=
\Tr \left( J (g^{-1}dg)^{2n} \begin{bmatrix}
\cos^{2n}(t) & 0 \\
0 & \cos^{2n}(t)
\end{bmatrix}  
\right)  \hspace{.4in}
\\
-
\Tr \left(
HJ H^{-1}  (g^{-1}dg)^{2n} \begin{bmatrix}
\cos^{2n}(t)  & 0 \\
0 & \cos^{2n}(t)
\end{bmatrix}  
 \right),
\end{multline*}
where we have used the fact that trace is invariant and $X^{-1}(t) J X(t) = J$. Since
\[
HJ H^{-1} 
=
\begin{bmatrix}
0 & g \\
-g^{-1} & 0
\end{bmatrix}, 
\]
we obtain that
\begin{multline*}
\Tr \left( k_t^{-1} k_t' (k^{-1} dk_t)^{2n} \right) 
=
\Tr \left( (g^{-1}dg)^{2n} \begin{bmatrix}
0 & \cos^{2n}(t) \\
-\cos^{2n}(t) & 0
\end{bmatrix}  
\right) \\
-
\Tr \left(
  (g^{-1}dg)^{2n} \begin{bmatrix}
 0 & g \cos^{2n}(t)  \\
-g^{-1} \cos^{2n}(t)  & 0
\end{bmatrix}  
 \right)=0
\end{multline*}
By Lemma \ref{oddCS} this shows that $CS(k_t)=0$, which completes the proof of the lemma.
 \end{proof}

\section{Differential extensions}

In \cite[Definition 2.1]{BS2}, Bunke and Schick give a definition for the ``differential extension'' (formerly known as a ``smooth extension'') of any generalized cohomology theory. For the purposes of this paper, we will restrict the discussion to case of complex K-theory, for which case the data becomes $\Z_2$-graded (see remarks following Definition 2.1 in \cite{BS2}). Let 
$[Ch]: K^0(M) \to H^{\textrm{even}}(M;\R)$ denote the ordinary even Chern character, which is induced by the
map $[Ch]: K^0(M) \to \Om^{\textrm{even}}(M;\R) / Im(d)$ defined in Section \ref{SEC:odd-chern-form}. 

\begin{defn} \label{defn:diffext}
A \emph{differential extension of K-theory} is a contravariant functor $\hat K$ from the category of compact smooth manifolds
(possibly with boundary) to the category of $\Z_2$-graded abelian groups, together with 
natural transformations
\begin{enumerate}
\item $R : \hat K^* (M) \to \Om^*_{cl} (M;\R)$
\item $I :  \hat K^* (M)  \to K^*(M)$
\item $a: \Om^{*-1}(M;\R) / Im(d) \to \hat K^{*}(M)$
\end{enumerate}
such that
\begin{enumerate}
\item The following diagram commutes
\[
\xymatrix{
 & K^{*}(M) \ar[rd]^{[Ch]} & \\
 \hat K^{*}(M) \ar[ru]^I \ar [rd]^{R} & &H^{*}(M) \\
 & \Om^{*}_{cl} (M) \ar[ru]_-{\textrm{deRham}} &
}
\]
\item $R \circ a = d$
\item The following sequence is exact
\[
\xymatrix{
K^{*-1}(M) \ar[r]^-{[Ch]} &  \Om^{*-1}(M;\R) / Im(d) \ar[r]^-{a} & \hat K^{*}(M) \ar[r]^{I} & K^*(M) \ar[r]^{\quad 0} & 0
} 
\]
\end{enumerate}
\end{defn}

Several constructions have been given producing differential extensions of K-theory, e.g. \cite{HS}, \cite{BS}, \cite{FL}.
Bunke and Schick clarify in \cite{BS3} that the axioms above do not uniquely determine the differential extension and illuminate the fundamental role played by an $S^1$-integration map (which may be regraded as the smooth analogue of a suspension isomorphism). Additionally, one can ask for a compatible ring structure on a differential extension, referred in \cite{BS3} as a \emph{multiplicative structure}, and in fact this structure implies the existence of an $S^1$-integration map. Moreover, such additional structure determines a differential extensions of K-theory uniquely by a unique natural isomorphism \cite{BS3}.

\begin{rmk}
Note that the data of a differential extension of K-theory splits into an even and odd part, according to the domain of the functors $\hat K^*$, $R$ and $I$, and the range of $a$.  It therefore makes sense to refer to the even or odd part of a differential extension, or equivalently to a differential extension of the even or odd part of K-theory, and we will do so in what follows.
\end{rmk}

An elementary construction of the even part of a differential
extension of K-theory was given by \cite{SS} by Simons and Sullivan, which is the Grothedieck group of isomorphism classes
of vector bundles with connection, up to the equivalence relation of Chern-Simons exactness.
While the language of differential extension was not used in \cite{SS}, almost all of the data and conditions in Definition \ref{defn:diffext} are apparent there, namely $R$ is the even Chern character given by Equation \eqref{eq:Ch(nabla)} and $I$ is the forgetful map.
The only map that is perhaps not explicit is the map $a: \Om^{\textrm{odd}}(M;\R) / Im(d) \to \hat K^{0}(M)$. It may be defined as the composition
\[
\xymatrix{
\Om^{\textrm{odd}}(M;\R) / Im(d) \ar[r]^-{\pi} & \left( \Om^{\textrm{odd}}(M;\R) / Im(d) \right) / Im(Ch) \ar[r] & \hat K^0(M)
}
\]
where $Ch: K^{-1}(M) \to \Om^{\textrm{odd}}(M;\R) / Im(d)$ is the odd Chern character, $\pi$ is the projection map, and the last map is shown in \cite{SS} to be an inclusion. It follows immediately that  sequence in Definition \ref{defn:diffext}  is exact.

It follows from a theorem of Bunke and Schick \cite{BS2} that this even part of a differential extension of K-theory, is naturally isomorphic  to the even part of any other differential extension. In particular, this is a construction of the ring of even differential K-theory (which is unique via a unique natural isomorphism).

We emphasize some particularly nice properties of this extension. Firstly, the geometric data used to construct this extension are rather small.  In particular, it may be relatively easier to build maps out of this differential extension. Secondly, this differential extension admits a natural refinement, defined using a lifting of the Chern Simons form to the free loopspace of the manifold,  \cite{TWZ}. In the next section we will construct the odd part of a differential extension of K-theory, from the 
geometric data given by the smooth mapping space $Map(M,U)$.

\section{The odd part of a differential extension} \label{sec:elemmodel}

Recall that for a smooth manifold $M$, $K^{-1}(M)$ may be defined  as  the set homotopy classes of maps from $M$ to $\U$.
This defines a contravariant functor to abelian groups under the operation of block sum.
    We now introduce an equivalence relation on the set $Map (M, \U )$ which is finer than homotopy equivalence, and which will be shown to define an abelian group  $\hat K^{-1}(M)$, giving the odd part of a differential extension of $K^{-1}$. 
 \begin{defn}
For $g_0, g_1 \in Map (M, \U )$ we say $g_0 \sim g_1$ if there is $g_t: M \times I \to U$ such that $CS(g_t)$ is $d$-exact.  This defines an equivalence relation, and we denote the set of equivalence classes by $\hat K^{-1} (M)$.

For a morphism $f:M'\to M$ of smooth manifolds, we define $f: \hat K^{-1} (M)\to \hat K^{-1} (M')$ by $f([g])=[g\circ f]$, which is well defined since $CS(g_t\circ f)=f^*(CS(g_t))$.
\end{defn}  

Note in particular that if $g_0 \sim g_1$ then there is a $g_t$ so that the degree zero part of $CS(g_t)$ is $0$, so that $\Tr  \int_0^1 (g_t^{-1} g_t') dt = 0$.

\begin{prop} \label{prop:group}
The block sum $\oplus$ induces a well defined abelian group structure on $\hat K^{-1} (M)$.
\end{prop}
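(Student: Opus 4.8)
The plan is to verify the abelian group axioms one at a time, reducing each to the additivity and naturality properties of $CS$ collected in Proposition \ref{prop:CSprop} together with the two model paths produced in Lemma \ref{lem:CSzero}. Throughout I will use that $[g_0]=[g_1]$ in $\hat K^{-1}(M)$ precisely when some path $g_t$ joining $g_0$ to $g_1$ has $d$-exact $CS(g_t)$, and that, after stabilizing to a common $U(n)$, the block sum, inverse, and composition of admissible paths are again admissible paths.

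First I would check that $\oplus$ descends to $\hat K^{-1}(M)$. Given $g_0\sim g_1$ via a path $g_t$ with $CS(g_t)$ exact and $h_0\sim h_1$ via $h_t$ with $CS(h_t)$ exact, the block-sum path $g_t\oplus h_t$ joins $g_0\oplus h_0$ to $g_1\oplus h_1$ and, by Proposition \ref{prop:CSprop}, satisfies $CS(g_t\oplus h_t)=CS(g_t)+CS(h_t)$, a sum of exact forms and hence exact. Thus $g_0\oplus h_0\sim g_1\oplus h_1$ and $[g]\oplus[h]:=[g\oplus h]$ is well defined. Associativity is then immediate, since block sum of matrices is strictly associative, so that $(g\oplus h)\oplus k=g\oplus(h\oplus k)$ already at the level of maps.

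Next I would produce a unit and inverses. The class $[1]$ of the constant identity map serves as the unit: in the stable group $g\oplus 1=g$, whence $[g]\oplus[1]=[g]$. For inverses, I first note that the assignment $[g]\mapsto[g^{-1}]$ is well defined, because if $g_t$ witnesses $g_0\sim g_1$ then $g_t^{-1}$ joins $g_0^{-1}$ to $g_1^{-1}$ with $CS(g_t^{-1})=-CS(g_t)$ exact. That $[g^{-1}]$ is an inverse for $[g]$, i.e.\ $[g\oplus g^{-1}]=[1]$, is exactly the content of the second statement of Lemma \ref{lem:CSzero}: the path $k_t$ there joins $g\oplus g^{-1}$ to the identity and has $CS(k_t)=0$, which is certainly exact. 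Commutativity is handled the same way using the first statement of Lemma \ref{lem:CSzero}: the path $f_t$ joins $g\oplus h$ to $h\oplus g$ with $CS(f_t)=0$, so $[g\oplus h]=[h\oplus g]$; in particular $[1]$ is then a two-sided unit.

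I expect the only genuine subtlety to be conceptual rather than computational: commutativity and the inverse law both fail \emph{on the nose}, since block sum is noncommutative and $g\oplus g^{-1}$ is not literally the identity, so one cannot avoid invoking the nontrivial paths of Lemma \ref{lem:CSzero} whose Chern--Simons forms vanish. All remaining points are bookkeeping, in particular matching matrix sizes by stabilizing to a common $U(n)$ before forming block sums, and reparametrizing the $[0,\pi/2]$-paths of Lemma \ref{lem:CSzero} to the interval $I$, which changes nothing since $CS$ is parametrization-independent.
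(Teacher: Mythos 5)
Your proof is correct and follows essentially the same route as the paper: well-definedness of $\oplus$ via the additivity $CS(g_t\oplus h_t)=CS(g_t)+CS(h_t)$ of Proposition \ref{prop:CSprop}, with the unit given by the constant identity map and both the inverse law and commutativity supplied by the two explicit paths of Lemma \ref{lem:CSzero} whose Chern--Simons forms vanish. Your additional checks (well-definedness of $[g]\mapsto[g^{-1}]$, stabilization to a common $U(n)$, reparametrization of the $[0,\pi/2]$-paths) are sound and merely make explicit what the paper leaves implicit.
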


\begin{proof} 
If $g_0 \sim g_1$ and $h_0 \sim h_1$ then for some paths $g_t$ and $h_t$ we have $CS(g_t)$ and $CS(h_t)$ are exact.
Then $g_0 \oplus h_0  \sim g_1 \oplus h_1$, since by Proposition \ref{prop:CSprop}, the path $g_t \oplus h_t$ satsfies
\[
CS(g_t \oplus h_t) = CS(g_t) + CS(h_t),
\]
which is exact. 

The constant map to  $1 \in \U$ is an additive identity for the sum. Also, Lemma \ref{lem:CSzero} shows that the equivalence class of $g$ has an inverse, given by the equivalence class of $g^{-1}$.  Finally, the product is abelian again by Lemma \ref{lem:CSzero}  since there is a path $f_t$ from $g \oplus h$ to  $h \oplus g$ such that $CS(f_t) = 0$.
 \end{proof}

\begin{prop}
There is a commutative diagram of group homomorphisms
\[
\xymatrix{
 & K^{-1}(M) \ar[rd]^{[Ch]} & \\
 \hat K^{-1}(M) \ar[ru]^I \ar [rd]^{R=Ch} & &H^{\textrm{odd}}(M) \\
 & \Om^{\textrm{odd}}_{cl} (M) \ar[ru]_-{\textrm{deRham}} &
}
\]
where $I([g])$ equals the homotopy class of any representative $g$ for $[g]$. The maps $R$ and $I$ are natural transformations of functors.
\end{prop}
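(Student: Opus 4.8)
The plan is to verify each of the five assertions in turn: well-definedness of the two maps $R=Ch$ and $I$ on equivalence classes, their additivity, commutativity of the triangle, and naturality. First I would show $R=Ch$ descends to $\hat K^{-1}(M)$. If $g_0\sim g_1$, then by definition there is a path $g_t$ with $CS(g_t)$ exact; applying $d$ and using the first identity of Proposition \ref{prop:CSprop} gives $0=d\,CS(g_t)=Ch(g_1)-Ch(g_0)$, so in fact $Ch(g_0)=Ch(g_1)$ as forms, not merely as cohomology classes. This is the essential point, since $R$ is required to land in $\Om^{\textrm{odd}}_{cl}(M;\R)$ itself. For $I$, the same path $g_t$ is in particular a homotopy from $g_0$ to $g_1$, so they represent the same class in $K^{-1}(M)$, and $I([g]):=[g]$ is well defined.

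Next I would check that both maps are group homomorphisms. For $R$ this is immediate from Corollary \ref{cor:Chadd}, which gives $Ch(g\oplus h)=Ch(g)+Ch(h)$, so $R([g]\oplus[h])=R([g\oplus h])=Ch(g)+Ch(h)=R([g])+R([h])$. For $I$ it follows since the group structures on both $\hat K^{-1}(M)$ and $K^{-1}(M)$ are induced by block sum, so $I$ takes $[g\oplus h]$ to the homotopy class of $g\oplus h$, which is the block sum of the homotopy classes of $g$ and $h$.

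For commutativity of the diagram I would compute both composites along a representative $g$. Going down then right, $\textrm{deRham}\circ R$ sends $[g]$ to the deRham cohomology class of the closed form $Ch(g)$. Going up then right, $[Ch]\circ I$ sends $[g]$ to $[Ch]$ of its homotopy class; since the odd Chern character on $K^{-1}(M)$ is defined by taking the deRham class of the Chern form of any representative---well defined on homotopy classes precisely because $Ch(g_1)-Ch(g_0)=d\,CS(g_t)$ is exact for a homotopy $g_t$---this is again the class of $Ch(g)$. The two composites therefore agree. Finally, naturality of $R$ and $I$ under a smooth map $f:M'\to M$ follows from the naturality of $Ch$ under pullback established just after Lemma \ref{lem:Ch-from-Theta}, together with the definition $f([g])=[g\circ f]$: one computes $R(f^*[g])=Ch(g\circ f)=f^*Ch(g)=f^*R([g])$, and likewise $I$ commutes with pullback on homotopy classes.

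I do not expect any serious obstacle here, as every ingredient has been assembled above; the one point requiring care is the well-definedness of $R$, where it is important to observe that exactness of $CS(g_t)$ forces the \emph{equality} $Ch(g_0)=Ch(g_1)$ of forms, which is exactly why the target of $R$ may be taken to be closed forms rather than a cohomology group.
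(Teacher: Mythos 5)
Your proposal is correct and follows essentially the same route as the paper: well-definedness of $R$ via $Ch(g_1)-Ch(g_0)=d\,CS(g_t)=0$ when $CS(g_t)$ is exact, well-definedness of $I$ as the forgetful map, additivity of $R$ via Corollary~\ref{cor:Chadd}, and the remaining checks done directly on representatives. The only cosmetic difference is that where you invoke the Section~\ref{SEC:odd-chern-form} fact that block sum induces the group structure on $K^{-1}(M)$, the paper's proof re-derives it by exhibiting the explicit path $f(t)=\left[\begin{smallmatrix} g & 0 \\ 0 & 1\end{smallmatrix}\right]X(t)\left[\begin{smallmatrix} 1 & 0 \\ 0 & h\end{smallmatrix}\right]X(t)^{-1}$ from $g\oplus h$ to $gh$.
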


\begin{proof}
The map $Ch: \hat K^{-1}(M) \to \Om^{\textrm{odd}}_{cl} (M) $ given by $Ch([g]) = Ch(g)$ is well defined since
for $g_0 \sim g_1$ we have $CS(g_t)$ is exact for some $g_t$ so that  $Ch(g_1) - Ch(g_0) = dCS(g_t) = 0$.
This is a group homomorphism, since $Ch(g \oplus h) = Ch(g) + Ch(h)$, by Corollary \ref{cor:Chadd}. 

The map $I: \hat K^{-1}(M)  \to K^{-1}(M)$ is the forgetful map which sends $CS$-equivalence classes to the equivalence class determined by path components. This map is a group homomorphism since addition on $K^{-1}(M)$ may be defined by using the 
block sum operation, as there is a path from $g \oplus h$ to $gh$ given by 
\[
f(t) = 
\begin{bmatrix}
g & 0 \\
0 & 1
\end{bmatrix}
X(t)
\begin{bmatrix}
1 & 0 \\
0 & h
\end{bmatrix}
X(t)^{-1}
\]
where $X(t) = \begin{bmatrix}
\cos(t)& \sin(t) \\
-\sin(t) & \cos(t)
\end{bmatrix} 
$ is a map $[0,\pi/2]\to U(2n)$ as in Lemma \ref{lem:CSzero}. It is straightforward to check that the diagram commutes.
\end{proof}
 
The remaining data of a differential extension consists of a natural transformation $a$, i.e. for each $M$ a map
\[
a : \Om^{\textrm{even} } (M) / Im(d) \to \hat K^{-1}(M) 
\]
such that $Ch\circ a=d$, and so that we obtain an exact sequence 
\[
\xymatrix{
 K^0(M) \ar[r]^-{[Ch]} & \Om^{\textrm{even} } (M) / Im(d) \ar[r]^-{a} &  \hat K^{-1}(M) \ar[r]^I & K^{-1}(M) \ar[r] & 0
}.
\]
Clearly $I$ is surjective. In order to define $a$, it is sufficient to define an isomorphism
 \[
 \beta : Ker(I) \to \left( \Om^{\textrm{even} } (M) / Im(d) \right) / Im([Ch])
  \]
 for then we may let $a$ be the composition of the projection $\pi$ with $\beta^{-1}$,
 \[
\xymatrix{
a: \Om^{\textrm{even} } (M) /  Im(d) \ar[r]^-{\pi} & \left( \Om^{\textrm{even} } (M) /  Im(d) \right) / Im([Ch]) \ar[r]^-{\beta^{-1}} & Ker(I) \subset \hat{K}^{-1}(M)
}
\]
and we have $Ker(a) = Im([Ch])$ and $Im(a) = Ker(I)$.

 There is a natural candidate  for this map $\beta$, as follows. 
Suppose $[g] \in  Ker(I) \subset \hat K^{-1}(M)$. 
Then, for any $g \in [g]$, there is a (non-unique) $g_t: M \times I \to U$ such that $g_1 = g$ and $g_0$ is the constant map 
$M \to \U$ to the identity of $\U$, \emph{i.e.}
\[
Ker(I) = \{ [g] | \, \textrm{for some $g \in [g]$ there is a path $g_t$ such that $g_1 = g$ and $g_0=1$}\}.
\]
Define
\[
\beta([g]) = CS(g_t)   \quad  \in \left( \Om^{\textrm{even} } (M) / Im(d) \right) / Im([Ch]) 
\]
where $g_t : M \times I \to U$ is a choice of map satisfying $g_0 = 1$ (the constant map to the identity in $U$), and 
$g_1 \in [g]$.

We first show this map $\beta$ is well defined. If $g, h \in [g]$ are two such choices, then there is a path $k_t$ such that $k_0 = g$, $k_1= h$ and $CS(k_t)$ is exact. Then for any path $g_t$ from $1$ to $g$, and path $h_t$ from $1$ to $h$, we can consider the loop based at the identity $1$, defined by taking the composition of paths $g_t * k_t * h_t^{-1}$. By Proposition \ref{prop:CSprop} and Theorem \ref{thm:CSCh} we have that, modulo exactness, $CS(g_t * k_t * h_t^{-1}) = CS(g_t) + CS(k_t) - CS(h_t) \in Im([Ch])$. Since $CS(k_t)$ is exact, this shows $CS(g_t) =  CS(h_t) \in \left( \Om^{\textrm{even} } (M) / Im(d) \right) / Im([Ch])$. This shows that $\beta$ does not depend in the choice of $g \in [g]$. 

Now, for fixed choice $g \in [g]$ suppose $g_t$ and $h_t$ are two paths starting at $1$ and ending at $g$. Then considering $g_t * h_t^{-1}: M \times S^1 \to U$, by Proposition \ref{prop:CSprop} and Theorem \ref{thm:CSCh} we have that, modulo exactness, 
\[
CS(g_t * h_t^{-1}) = CS(g_t) - CS(h_t) \in Im([Ch])
\]
so that $CS(g_t) = CS(h_t) \in \left( \Om^{\textrm{even} } (M) / Im(d) \right) / Im(Ch) $.

It is clear that $\beta$ is a homomorphism, since $\beta ([g \oplus h]) = CS(g_t \oplus h_t) = CS(g_t) + CS(h_t) = \beta([g]) + \beta([h])$, and we now show $\beta$ is injective:
If $\beta([g]) = 0$ then 
\[
CS(g_t) = Ch(\nabla) + dZ
\] 
for some some path $g_t$ from $1$ to $g$, some connection $\nabla$, and some form $Z$.  By Theorem \ref{thm:CSCh} there is loop $h_t$ such that $CS(h_t) = Ch(\nabla)$. Then 
 $k_t= h_t^{-1} * g_t : M \times I \to U$ satisfies $k_0 = 1$, $k_1 = g$, and $CS(k_t) = CS(g_t) -Ch(\nabla) = dZ$ is exact, so that $[g] = 0$.

 It therefore remains to show that $\beta$ is surjective. We reduce the statement as follows.
 \begin{lem} \label{lem:recast}
The map 
\[
\beta: Ker(I) \to  \left( \Om^{\textrm{even} } (M;\R) / Im(d) \right) / Im([Ch])  
\]
is surjective if and only if the following two statements both hold:
\begin{enumerate}
\item Every exact odd form $dZ$ on $M$ is given by $dZ=Ch(g)$ for some $g \in Map^o(M, U)$ in the connected component of the identity $1$.
\item For all $X \in \left( \Om_{cl}^{\textrm{even} } (M;\R) / Im(d) \right) / Im([Ch])$  there is some 
$g_t : M \times I \to U$ satisfying $g_0 = 1$ and
\[
X =  CS(g_t) \in \left( \Om^{\textrm{even} } (M;\R) / Im(d) \right) / Im([Ch])
\]

\end{enumerate}
\end{lem}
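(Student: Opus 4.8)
The plan is to prove both implications, noting at the outset that statement (1) is precisely Corollary \ref{cor:ChSurjOnExact} and therefore holds unconditionally; the genuine content of the lemma is thus the equivalence of surjectivity of $\beta$ with statement (2), while (1) enters as the tool that reduces an \emph{arbitrary} even form to a \emph{closed} one. The guiding idea is to decompose a general class in the target $\left( \Om^{\textrm{even}}(M)/Im(d) \right)/Im([Ch])$ into a part detected by its differential, which is an exact odd form handled by (1), and a residual closed part handled by (2).

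For the substantive implication, assuming both (1) and (2), I would start from an arbitrary even form $\omega$ representing a class in the target. Since $d\omega$ is an exact odd form, statement (1) produces $g \in Map^o(M,U)$ with $Ch(g) = d\omega$, and because $g$ lies in the identity component there is a path $\gamma_t$ from the constant map $1$ to $g$. By the fundamental property $d\,CS(\gamma_t) = Ch(g) - Ch(1) = d\omega$ of Proposition \ref{prop:CSprop}, the form $\omega - CS(\gamma_t)$ is closed. Applying statement (2) to the class of this closed form yields a path $\delta_t$ with $\delta_0 = 1$ and $[CS(\delta_t)] = [\omega - CS(\gamma_t)]$ in the target quotient. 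I would then take the block sum $\gamma_t \oplus \delta_t$, a path from $1$ to $g \oplus \delta_1$; since both endpoints are null-homotopic ($g$ lies in $Map^o$, and $\delta_1$ is the endpoint of a path starting at the identity), we have $[g \oplus \delta_1] \in Ker(I)$, and the additivity $CS(\gamma_t \oplus \delta_t) = CS(\gamma_t) + CS(\delta_t)$ from Proposition \ref{prop:CSprop} gives
\[
\beta([g \oplus \delta_1]) = [CS(\gamma_t)] + [CS(\delta_t)] = [CS(\gamma_t)] + [\omega - CS(\gamma_t)] = [\omega],
\]
establishing surjectivity.

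For the converse, assuming $\beta$ surjective, statement (1) holds for free since it is Corollary \ref{cor:ChSurjOnExact}, and statement (2) follows immediately: a closed class $X \in \left( \Om_{cl}^{\textrm{even}}(M)/Im(d) \right)/Im([Ch])$ determines a class in the target of $\beta$, and any preimage $[g] \in Ker(I)$ comes equipped with a path $g_t$ from $1$ to $g$ whose Chern--Simons form represents $X$, which is exactly what (2) asserts.

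The step I expect to require the most care is the bookkeeping of quotients: $\beta$ takes values in \emph{all} even forms modulo exact (and modulo $Im([Ch])$), whereas (2) only supplies \emph{closed} representatives, so I must verify that the error term $\omega - CS(\gamma_t)$ is genuinely closed before invoking (2), and that the final identity $[CS(\gamma_t)] + [\omega - CS(\gamma_t)] = [\omega]$ is valid in the full quotient. The conceptual crux is recognizing that passing to $d\omega$ and applying (1) is exactly what removes the non-closed component, reducing the surjectivity problem to the closed case covered by (2); once this is seen, the additivity of $CS$ under block sum and the stability of $Ker(I)$ under $\oplus$ make the gluing routine.
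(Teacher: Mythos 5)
Your proposal is correct, and the substantive direction --- that (1) and (2) together imply surjectivity of $\beta$ --- is essentially identical to the paper's argument: both use (1) on the exact odd form $d\omega$ to produce $g \in Map^o(M,U)$ and a path $\gamma_t$ from $1$ to $g$, observe that $\omega - CS(\gamma_t)$ is closed because $dCS(\gamma_t) = Ch(g) = d\omega$, invoke (2) on the residual closed class, and assemble the preimage as the block sum $\gamma_t \oplus \delta_t$ using the additivity of $CS$ from Proposition \ref{prop:CSprop}. Where you genuinely diverge is the forward direction: you dispatch statement (1) by citing Corollary \ref{cor:ChSurjOnExact}, which is logically legitimate since that corollary is proved independently in Section \ref{SEC:odd-chern-form}, so ``surjectivity implies (1)'' holds simply because (1) holds outright. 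The paper instead proves this implication intrinsically: given an exact odd form $Y = dZ$, surjectivity yields $[Z] = CS(g_t)$ for some path with $g_0 = 1$, and since $d$ descends to the quotient $\left( \Om^{\textrm{even}}(M;\R)/Im(d)\right)/Im([Ch])$ (the classes in $Im([Ch])$ being represented by closed forms), one gets $Y = dCS(g_t) = Ch(g_1) - Ch(g_0) = Ch(g_1)$ with $g_1 \in Map^o(M,U)$. The paper's route buys a self-contained biconditional, independent of the earlier corollary, which explains why the lemma is stated as an equivalence at all; your route buys the accurate structural observation that, once Corollary \ref{cor:ChSurjOnExact} is in hand, the lemma's genuine content is the equivalence of surjectivity with condition (2) alone, with (1) serving only as the device that reduces an arbitrary even class to a closed one.
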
 
 \begin{proof}
 The second statement clearly follows from the surjectivity of $\beta$. For the first statement, if $Y$ is an exact odd form, choose $Z$ so that $Y = dZ$ and let $[Z] \in \left( \Om^{\textrm{even} } (M;\R) / Im(d) \right) / Im([Ch])$  be the image of $Z$ under the quotient map. Then by assumption we can write 
 \[
 [Z] = CS(g_t) \in \left( \Om^{\textrm{even} } (M) / Im(d) \right) / Im([Ch])
 \]
 for some $g_t : M \times I \to U$ such that $g_0 = 1$. Since exterior derivative is well defined on 
 $\left( \Om^{\textrm{even} } (M;\R) / Im(d) \right) / Im([Ch])$ and independent of choice of representatives, we have
 \[
 Y = dZ= d CS(g_t) = Ch(g_1)-Ch(g_0)=Ch(g_1),
 \]
 and $g_1$ is in the connected component of the identity.
 
 Conversely, suppose $X \in  \left( \Om^{\textrm{even} } (M) / Im(d) \right) / Im([Ch])$. Then $d X \in \Om^{\textrm{odd} }(M;\R)$ is well defined and, by the first assumption, we may write $dX = Ch(g) = dCS(g_t)$ where $g_t$ is any choice of path from the identity to the given $g \in Map^o(M, U)$.  So, $X - CS(g_t)$ is a closed even form. Let 
 $[X - CS(g_t)] \in  \left( \Om_{cl}^{\textrm{even} } (M) / Im(d) \right) / Im([Ch])$ be the image of $X - CS(g_t)$ under the quotient map.
 Then, by the second assumption, 
 \[
 [X - CS(g_t)] = CS(h_t) \in  \left( \Om^{\textrm{even} } (M) / Im(d) \right) / Im([Ch])
 \]
  for some $h_t$ satisfying $h_0=1$.

 Therefore, $X = CS(g_t \oplus h_t) \in \left( \Om^{\textrm{even} } (M) / Im(d) \right) / Im([Ch])$, so that
 $\beta( [g_1 \oplus h_1]) = X$,  and 
 $g_t \oplus h_t$ satisfies $g_0 \oplus h_0= 1$, so $[g_1 \oplus h_1] \in Ker(I)$.
 \end{proof}
  
The first condition in Lemma \ref{lem:recast} follows from Corollary~\ref{cor:ChSurjOnExact}. 
We now show that condition (2) holds.

\begin{lem} \label{cor:CSontoClosedModExact}  
Every closed real valued even differential form on $M$ equals, modulo exact forms, $CS(g_t)$ for some path $g_t$ satisfying $g_0 = 1$.
\end{lem}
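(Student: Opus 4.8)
The plan is to move the problem off of $M$ and onto the cylinder $M \times I$, where the form to be realized becomes \emph{exact} and Corollary~\ref{cor:ChSurjOnExact} applies directly. Given a closed even form $\omega \in \Om^{\textrm{even}}_{cl}(M;\R)$, I would consider the odd form $\pi_M^*\omega \wedge dt$ on $M \times I$, where $\pi_M : M \times I \to M$ is the projection and $t$ is the coordinate on $I$. Because $\omega$ is closed of even degree, $\pi_M^*\omega \wedge dt = d(t\,\pi_M^*\omega)$, so this odd form is exact on $M \times I$ \emph{no matter what the de Rham class of $\omega$ is}. This is the crux of the argument: naively one expects the main obstacle to be realizing arbitrary \emph{real} cohomology classes, since the closed Chern-Simons forms of based loops only produce the image of the Chern character (via Theorem~\ref{thm:CSCh}), a countable subgroup of $H^{\textrm{even}}(M;\R)$. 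Passing to the cylinder dissolves this obstruction entirely, since exactness of $\pi_M^*\omega \wedge dt$ is automatic and Corollary~\ref{cor:ChSurjOnExact} realizes \emph{every} exact odd form.

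Concretely, I would apply Corollary~\ref{cor:ChSurjOnExact} to the compact manifold with corners $M \times I$ to obtain a map $G \in Map^o(M \times I, \U)$ with $Ch(G) = \pi_M^*\omega \wedge dt$. Setting $g_t := G(\cdot, t)$, the definition of the Chern-Simons form as integration along the fiber gives $CS(g_t) = \int_I Ch(G) = \int_I \pi_M^*\omega \wedge dt = \omega$, on the nose. Thus every closed even form is realized exactly, which is stronger than the ``modulo exact'' in the statement. The only defect is that $g_0 = G(\cdot, 0)$ is in general not the constant map to the identity, so the normalization $g_0 = 1$ demanded by the lemma is not yet met.

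The genuine remaining obstacle is therefore this basepoint normalization, and I would dispose of it using the technical Lemma~\ref{lem:CSzero}. Writing $a := g_0$, form the block sum $p_t := g_t \oplus a^{-1}$, where $a^{-1}$ denotes the map $x \mapsto (g_0(x))^{-1}$ held constant in $t$. A path that is constant in $t$ has vanishing Chern-Simons form (its Chern form on $M \times I$ carries no $dt$ factor, by Lemma~\ref{oddCS}), so Proposition~\ref{prop:CSprop} yields $CS(p_t) = CS(g_t) + 0 = \omega$, while $p_0 = a \oplus a^{-1}$. By the second statement of Lemma~\ref{lem:CSzero} applied to $g = a$, there is a path $k_t$ with $k_0 = a \oplus a^{-1}$, $k_{\pi/2} = \mathrm{id}$, and $CS(k_t) = 0$; its reversal $\bar k_t$ runs from the identity to $a \oplus a^{-1}$ and still satisfies $CS(\bar k_t) = -CS(k_t) = 0$. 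Concatenating, the path $f_t := \bar k_t * p_t$ starts at the constant identity map, and by the composition property of Proposition~\ref{prop:CSprop} satisfies $CS(f_t) = CS(\bar k_t) + CS(p_t) = \omega$. This $f_t$ has $f_0 = 1$ and $CS(f_t) = \omega$, which completes the proof. I expect the verifications that $\pi_M^*\omega \wedge dt$ is exact and that fiber integration returns $\omega$ to be routine; the real content is the cylinder reduction and the use of Lemma~\ref{lem:CSzero} to repair the basepoint while preserving the Chern-Simons form.
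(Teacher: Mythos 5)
Your proof is correct, and it takes a genuinely different route from the paper's. The paper proves this lemma by an explicit analytic argument: it chooses bundles with connections $(E_j,\nabla_j)$ whose Chern forms give a basis of $H^{\textrm{even}}(M;\R)$, realizes rescaled Chern forms as $CS$ of the loops $e^{2\pi i P_j t}$ restricted to subintervals $[0,s_j]$ (reusing the computation from Theorem \ref{thm:CSCh}), and then shows via the inverse function theorem --- with a linear-independence argument for the functions $\sin^{2n}(\pi s)$ to get a nonvanishing Jacobian --- that every \emph{real} linear combination of the basis forms is attained modulo exact forms. Your cylinder trick sidesteps all of this: since $\pi_M^*\omega\wedge dt = d(t\,\pi_M^*\omega)$ is exact on $M\times I$ regardless of the de Rham class of $\omega$, Corollary \ref{cor:ChSurjOnExact} applies directly (legitimately, since $M\times I$ is again a compact manifold with corners, and that corollary produces $Ch(G)$ equal to the prescribed exact form on the nose), and fiber integration returns $\omega$ exactly by the very definition \eqref{eq:CS(gt)} of $CS$. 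The basepoint repair --- padding with the constant block $a^{-1}$, then prepending the reversed path from Lemma \ref{lem:CSzero} and invoking the composition rule of Proposition \ref{prop:CSprop} --- is exactly right, and there is no circularity, since Corollary \ref{cor:ChSurjOnExact}, Lemma \ref{oddCS}, Proposition \ref{prop:CSprop}, and Lemma \ref{lem:CSzero} are all established before this lemma. Your argument even yields the stronger conclusion $CS(g_t)=\omega$ exactly rather than modulo exact forms; in particular the degree-zero constants come out automatically, consistent with the fact that the winding integral is constrained to lie in $\Z$ only for \emph{loops}, not open paths. What the paper's route buys in exchange is explicitness: it exhibits the realizing paths concretely as projection-valued exponentials $e^{2\pi i P_j t}$ tied to a chosen cohomology basis, whereas your $G$ comes from the inductive sphere/Clifford-algebra construction behind Corollary \ref{cor:ChSurjOnExact} and is correspondingly less transparent. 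The only points deserving a sentence of care in a final write-up are the sign convention for $\int_I$ (harmless here because $\omega$ is even, so $\pi_M^*\omega\wedge dt = dt\wedge\pi_M^*\omega$) and the smoothing of the concatenation $\bar k_t * p_t$, which the remark following Proposition \ref{prop:CSprop} already handles.
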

\begin{proof} 
Since the Chern character map  $Ch: K^0(M) \to H^{\textrm{even}}(M;\R)$ becomes as isomorphism after tensoring with $\R$, we can find some bundles with connection $(E_1,\nabla_1),\dots,(E_r,\nabla_r)$, such that the forms $\{Ch(\nabla_j)\}_{j=1\dots r}$ give a basis for all of $H^{\textrm{even}}(M;\R)$. 
Given these $\nabla_1,\dots, \nabla_r$, we can use the path $g^j_t=e^{2 \pi i P_j t}$ for each $j$, where $P_j$ is a suitable choice of a projection operator associated to $\nabla_{j}$ as in the proof of Theorem \ref{thm:CSCh}. Restricting any $g^j_t$ to an interval $[0,s]\subset [0,1]$, we see from Lemma \ref{oddCS} and Equations \eqref{EQU:(gdg)2n} and \eqref{EQU:CS(gt)=Ch(nabla)} 
in the proof of Theorem \ref{thm:CSCh}, that
 \begin{eqnarray*}
CS\left( g^j_t \Big|_{[0,s]}  \right)&=& 
 \Tr \sum_{n\geq 0} \frac{(-1)^n}{(2 \pi i)^{n+1}}  \frac{n!}{(2n)!} \int_0^s ((g^j_t)^{-1} (g^j_t)') \cdot ((g^j_t)^{-1} d(g^j_t))^{2n} dt \\
 &=& \Tr \sum_{n\geq 0} \frac{(-1)^n}{(2 \pi i)^{n+1}} \frac{n!}{(2n)!} \left( 2 \pi i (-4)^n \int_0^s  \sin^{2n} ( \pi t)  dt \right) P_j (dP_j)^{2n} \\
 &=&   \sum_{n\geq 0}\frac{ \int_0^s  \sin^{2n} ( \pi t)  dt }{\int_0^1  \sin^{2n} ( \pi t)  dt} 
\cdot  Ch(\nabla_{j})^{(2n)}
\end{eqnarray*}
where $Ch(\nabla_{j})^{(2n)}$ denotes the degree $2n$ part of $Ch(\nabla_{j})$. Using that $\int_0^1  \sin^{2n} ( \pi t)  dt={\frac{1}{4^{n}}\binom{2n}{n}}$, we denote by $f_{2n}(s)=\frac{4^n}{\binom{2n}{n}} \cdot { \int_0^s  \sin^{2n} ( \pi t)  dt }$, so that
\[
CS\left( g^j_t \Big|_{[0,s]}  \right)=\sum_n f_{2n}(s)\cdot  Ch(\nabla_{j})^{(2n)}.
\]

We note that the various degrees scale differently. Nevertheless, using this fact above, it is fairly striaghtforward to show using only algebra that any rational linear combination of Chern forms can be written, modulo exact, as $CS(g_t)$ for some path $g_t$. 
But, to obtain all real linear combinations of Chern forms, it seems we must give an analytic argument, which we do now.

Each component $Ch(\nabla_{j})^{(2n)}$ is a closed form, so that it can be written in terms of the generating set
\begin{equation}\label{EQ:bnlj}
Ch(\nabla_{j})^{(2n)}=\sum_{\ell=1}^r b_j^{n,\ell} Ch(\nabla_\ell)\quad\quad
\text{modulo exact forms}
\end{equation}
for some real numbers $b_j^{n,\ell}$. We may take the block sum of all the $g^j_t$ restricted to $[0,s_j]$, by extending each to be constant on $[s_j,1]$, respectively, and then the Chern-Simons form of this is given by 
\begin{eqnarray*}
\sum_{j=1}^r CS\left( g^j_t \Big|_{[0,s_j]}  \right)&=&\sum_{j=1}^r\sum_{n}\sum_{\ell=1}^r f_{2n}(s_j) b_j^{n,\ell} Ch(\nabla_\ell) \\
&=& \sum_{\ell=1}^r\left(\sum_{j=1}^r F^\ell_{j}(s_j) \right) Ch(\nabla_\ell), 
\end{eqnarray*}
where we have set $F^\ell_j(s)=\sum_n b_j^{n,\ell}f_{2n}(s)$. In order to show that we can obtain any real linear combination of Chern forms from above expression, e.g. $\sum_j c_j\cdot Ch(\nabla_j)$ for some constants $c_j$, we need to solve the following system of $r$ equations in the $r$ variables $s_1,\dots, s_r$:
\begin{equation*}
\left\{
\begin{matrix}
c_1&=& \sum\limits_{j=0}^r F^{1}_{j}(s_j) \\
\vdots&& \vdots \\
c_r&=& \sum\limits_{j=0}^r F^{r}_{j}(s_j)
\end{matrix}
\right.
\end{equation*}
Let $G:\R^r\to \R^r$ be given by $(s_1,\dots, s_r)\mapsto(\sum_{j=0}^r F^{\ell}_{j}(s_j))_\ell$. By the inverse
function theorem, it suffices to prove that the matrix $M= dG = \big(\frac{\partial}{\partial s_j}(\sum_{j=0}^r F^{\ell}_{j}(s_j)) \big)_{\ell,j}$ has a non-zero determinant for some $s_1,\dots, s_r$. Then there is a neighborhood on which the function $G$ is invertible, so we may obtain any closed form $\sum_j c_j\cdot Ch(\nabla_j)$ for $(c_1,\dots,c_r)$ in a small
cube contained in $I_1\times\dots\times I_r$. Then, using block sums and inverses, with $CS(g_t \oplus h_t) = CS(g_t) + CS(h_t)$ and $CS(g^{-1}_t) = - CS(g_t)$, we may obtain any linear combination $\sum_j c_j\cdot Ch(\nabla_j)$ for $(c_1,\dots,c_r)\in \R^r$ as some $CS(g_t)$ (modulo exact forms).

Note that
\[
\frac{\partial}{\partial s_j}\big(\sum_{j=0}^r F^{\ell}_{j}(s_j)\big)=\frac d {ds_j} F_j^\ell(s_j) =\sum_n b_{j}^{n,\ell}\cdot \frac{4^n}{\binom{2n}{n}}\cdot \sin^{2n}(\pi s_j)
\]
 so that
\[ 
M
=
\left(\sum_n b_{j}^{n,\ell}\cdot \frac{4^n}{\binom{2n}{n}}\cdot \sin^{2n}(\pi s_j)\right)_{\ell,j} 
\]

Let $B_n = (b^{n,\ell}_j)_{\ell,j}$ for $0 \leq n \leq d$.  From the definition of $b^{n,\ell}_j$, we see by summing \eqref{EQ:bnlj} over all $n$ that $Ch(\nabla_j)=\sum_{\ell=1}^r\sum_n b^{n,\ell}_j Ch(\nabla_\ell)$, so that 
\[
B_0 + \cdots + B_d = Id
\] 
Taking determinant and using multi-linearlity we have
\[
\sum_{{0 \leq i_1, \ldots , i_r \leq n}}
\,\, \sum_{ \sigma \in S_r} \sgn(\sigma)\cdot  b^{i_1,\sigma(1)}_1 \cdots b^{i_r,\sigma(r)}_r 
=1
\]
This shows that not all of the summands $\left(\sum_{ \sigma \in S_r} \sgn(\sigma)\cdot  b^{i_1,\sigma(1)}_1 \cdots b^{i_r,\sigma(r)}_r \right)$ are zero.
On the other hand, 
\[
M= C_0+ \dots +C_d
\]
where 
\[ 
C_n = \frac{4^n}{\binom{2n}{n}} \cdot B_n \cdot
\left(
\begin{array}{ccc}
\sin^{2n}(\pi s_1) &  & \\
 & \ddots & \\
 & &  \sin^{2n}(\pi s_r)
\end{array}
\right)
\]
so that
\[
det(M) = \sum_{{1 \leq i_1, \ldots , i_r \leq n}}
\,\, \sum_{ \sigma \in S_r} \sgn(\sigma)\cdot  b^{i_1,\sigma(1)}_1 \cdots b^{i_r,\sigma(r)}_r 
\left( \prod_{k=1}^r \frac{4^{i_k}}{\binom{2i_k}{i_k}}\cdot \sin^{2i_k}(\pi s_k)\right).
\]
The collection of products of sine functions appearing as the summands above are linearly independent over $\R$. Since at least one coefficient $\left( \sum_{ \sigma \in S_r} \sgn(\sigma)\cdot  b^{i_1,\sigma(1)}_1 \cdots b^{i_r,\sigma(r)}_r  \right) $ is non-zero, this shows the determinant is non-zero for
some $s_1 , \ldots , s_r$. This completes the proof.
\end{proof}

\begin{cor} \label{cor:betasurj}
The map 
\[
\beta: Ker(I) \to \Om^{\textrm{even} } (M) / \left( Im([Ch]) + Im(d) \right)
\]
is surjective. 
  \end{cor}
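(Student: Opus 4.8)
The plan is to deduce this directly from Lemma \ref{lem:recast}, which already recasts the surjectivity of $\beta$ as the conjunction of two concrete conditions. Before verifying those conditions, I would first reconcile the two presentations of the target group. The iterated quotient $\left(\Om^{\textrm{even}}(M;\R)/Im(d)\right)/Im([Ch])$ appearing in Lemma \ref{lem:recast} agrees, via the third isomorphism theorem for abelian groups, with the group $\Om^{\textrm{even}}(M)/\left(Im([Ch]) + Im(d)\right)$ named in the statement: both are the quotient of $\Om^{\textrm{even}}(M;\R)$ by the subgroup generated by exact forms together with (representatives of) Chern character forms. So the two maps called $\beta$ are literally the same map, and it suffices to establish surjectivity onto either presentation.

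Next I would check the two hypotheses of Lemma \ref{lem:recast} in turn. Condition (1), that every exact odd form on $M$ is realized as $Ch(g)$ for some $g \in Map^o(M,\U)$ in the identity component, is precisely the content of Corollary \ref{cor:ChSurjOnExact}, as is noted immediately after the lemma. Condition (2), that every class $X$ in the target is of the form $CS(g_t)$ for some path $g_t$ with $g_0 = 1$, is supplied by Lemma \ref{cor:CSontoClosedModExact}: every class in $\left(\Om^{\textrm{even}}_{cl}(M;\R)/Im(d)\right)/Im([Ch])$ is represented by a \emph{closed} even form, and that lemma produces a path $g_t$ with $g_0 = 1$ for which $CS(g_t)$ equals this closed representative modulo exact forms. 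Since agreement modulo $Im(d)$ certainly implies agreement modulo $Im([Ch]) + Im(d)$, the class of $CS(g_t)$ equals $X$ in the target quotient, as required.

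With both conditions in hand, Lemma \ref{lem:recast} yields the surjectivity of $\beta$ at once, completing the proof of Corollary \ref{cor:betasurj}. Because the substantive analytic work has already been carried out in Corollary \ref{cor:ChSurjOnExact} and Lemma \ref{cor:CSontoClosedModExact}, I do not anticipate a genuine obstacle here; the only points demanding care are the bookkeeping identifying the two quotient presentations of the target, and confirming that the closed-form representative produced by Lemma \ref{cor:CSontoClosedModExact} does represent the prescribed class after passing to the further quotient by $Im([Ch])$.
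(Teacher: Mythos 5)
Your proposal is correct and matches the paper's own proof, which likewise cites Lemma \ref{lem:recast} and verifies its two conditions via Corollary \ref{cor:ChSurjOnExact} and Lemma \ref{cor:CSontoClosedModExact}, respectively. Your added care in identifying the iterated quotient $\left(\Om^{\textrm{even}}(M;\R)/Im(d)\right)/Im([Ch])$ with $\Om^{\textrm{even}}(M)/\left(Im([Ch])+Im(d)\right)$ is sound bookkeeping that the paper leaves implicit.
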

  \begin{proof}
The two conditions in Lemma \ref{lem:recast} follow from Corollary~\ref{cor:ChSurjOnExact} and Lemma~\ref{cor:CSontoClosedModExact}, respectively.
 \end{proof}

Finally, having shown $\beta$ is invertible, we may define $a = \beta^{-1} \circ \pi$ as explained above, giving us the desired exact sequence
 \[
\xymatrix{
 K^0(M) \ar[r]^-{[Ch]} & \Om^{\textrm{even} } (M) / Im(d) \ar[r]^-{a} &  \hat K^{-1}(M) \ar[r]^I & K^{-1}(M) \ar[r] & 0.
}
\]

\begin{thm} \label{thm:diffext}
The functor $M \mapsto K^{-1}(M)$, together with the maps $R=Ch, I$, and $a$ above, define a differential extension of odd K-theory. 
\end{thm}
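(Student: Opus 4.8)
The plan is to assemble the ingredients established in the preceding sections and then verify the two axioms of Definition \ref{defn:diffext} that have not yet been checked on the nose, namely condition (2), $R \circ a = d$, and the exactness in condition (3); everything else has essentially been produced already. First I would record that $\hat K^{-1}$ is a contravariant functor into abelian groups: functoriality on morphisms is built into the definition of $\hat K^{-1}$ via the identity $CS(g_t \circ f) = f^*(CS(g_t))$, and Proposition \ref{prop:group} supplies the abelian group structure induced by block sum. The natural transformations $R = Ch$ and $I$, together with the commutativity of the triangle in condition (1), were constructed in the proposition following Proposition \ref{prop:group}; naturality of $R$ is just naturality of the odd Chern form under pullback, and naturality of $I$ is immediate. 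Since Corollary \ref{cor:betasurj} gives surjectivity of $\beta$, and $\beta$ was shown earlier to be a well-defined injective homomorphism, $\beta$ is an isomorphism onto $Ker(I)$, so $a = \beta^{-1} \circ \pi$ is well-defined. I would then note that $a$ is a natural transformation, which reduces to naturality of $\pi$ and of $\beta$, and hence to naturality of $CS$ and $Ch$ under pullback.

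Next I would verify $R \circ a = d$. Given $X \in \Om^{\textrm{even}}(M;\R)/Im(d)$, by construction $a(X) = [g_1]$ for a path $g_t$ with $g_0 = 1$ and $CS(g_t) = X$ modulo $Im(d)$ and $Im([Ch])$. Then $R(a(X)) = Ch(g_1) = dCS(g_t) = dX$, using $Ch(g_0) = Ch(1) = 0$ and the fact that $d$ annihilates both exact forms and the closed forms lying in $Im([Ch])$, so that $d$ is well-defined on the relevant quotient and lands in $\Om^{\textrm{odd}}_{cl}(M;\R)$. This is exactly condition (2).

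Finally I would verify exactness of
\[
K^0(M) \xrightarrow{[Ch]} \Om^{\textrm{even}}(M;\R)/Im(d) \xrightarrow{a} \hat K^{-1}(M) \xrightarrow{I} K^{-1}(M) \to 0 .
\]
Surjectivity of $I$ is clear, since every homotopy class of maps $M \to U$ has a representative. Exactness at $\hat K^{-1}(M)$, i.e. $Im(a) = Ker(I)$, follows because $\pi$ is surjective and $\beta^{-1}$ maps isomorphically onto $Ker(I)$. Exactness at $\Om^{\textrm{even}}(M;\R)/Im(d)$, i.e. $Ker(a) = Im([Ch])$, follows because $\pi$ has kernel exactly $Im([Ch])$ while $\beta^{-1}$ is injective.

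I expect the assembly above to be essentially routine: the genuine analytic difficulty has already been dispatched in Lemma \ref{cor:CSontoClosedModExact} (and the surjectivity-on-exact-forms input in Corollary \ref{cor:ChSurjOnExact}), which together feed Lemma \ref{lem:recast} and Corollary \ref{cor:betasurj}. The one point that deserves a little care, and which I regard as the main remaining obstacle, is the naturality of $a$, since $a$ is defined indirectly through the path-dependent construction of $\beta$; here one must confirm that the choices of connecting paths and the resulting Chern--Simons forms are compatible with pullback along smooth maps, so that $f^* \circ a_M = a_{M'} \circ f^*$ holds in the quotient. This is where I would spend the bulk of the verification, the rest being formal.
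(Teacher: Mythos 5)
Your proposal is correct and takes essentially the same route as the paper: there, too, the group structure, the commutative triangle, and the exactness of the sequence are taken as already established (Proposition \ref{prop:group}, the subsequent proposition, and the construction $a=\beta^{-1}\circ\pi$ together with Corollary \ref{cor:betasurj}), so that the proof of Theorem \ref{thm:diffext} reduces to exactly the computation you give, $Ch(a(X))=Ch(g_1)-Ch(g_0)=dCS(g_t)=dX$, justified by $d$ being well defined on the quotient since $d(Im([Ch]))=0$. The naturality of $a$ that you flag as the main remaining work is left implicit in the paper and is in fact routine, reducing as you indicate to $CS(g_t\circ f)=f^*(CS(g_t))$ and naturality of $Ch$.
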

\begin{proof}
It only remains to show that $R \circ a = d$. For a given form $X\in \Om^{\textrm{even} } (M) / Im(d)$, we have $\pi(X) = \beta([g_1])= CS(g_t)$ 
for some  path $g_t$ starting at $g_0=1$. Then $Ch ( a ( X))  =Ch([g_1])= Ch(g_1) =  Ch(g_1) - Ch(g_0) = dCS(g_t) =dX$, as $d\pi(X)=dX$ is well-defined since $d(Im([Ch]))=0$.
\end{proof}

We close this section with a slightly stronger result than was needed here. It will be used in the appendix.
  
 \begin{cor} \label{cor:CSsurj}
 Let $Map^o(M \times I, U) = \{ g_t: M \times I \to U | \, g_0 = 1 \}$.
The map 
\[
CS: Map^o(M \times I, U) \to \Om^{\textrm{even} } (M) / \left(Im(d) \right)
\]
is surjective. 
  \end{cor}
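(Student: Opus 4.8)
The plan is to reduce this to the two surjectivity results already established, namely Corollary~\ref{cor:ChSurjOnExact} (every exact odd form is $Ch(g)$ for some $g$ in the identity component) and Lemma~\ref{cor:CSontoClosedModExact} (every closed even form is, modulo exact, $CS(h_t)$ for some path with $h_0=1$). The key structural observation is that a path $g_t$ with $g_0=1$ satisfies $d\,CS(g_t)=Ch(g_1)-Ch(g_0)=Ch(g_1)$, since $Ch(1)=0$. Thus the failure of $CS(g_t)$ to be closed is governed entirely by the odd Chern form of the endpoint $g_1$, and this endpoint can be prescribed freely among the exact odd forms.

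Given an arbitrary $X\in\Om^{\textrm{even}}(M;\R)$, first I would address its (necessarily exact) differential $dX\in\Om^{\textrm{odd}}(M;\R)$. By Corollary~\ref{cor:ChSurjOnExact} there is some $g\in Map^o(M,\U)$ with $Ch(g)=dX$, and since $g$ lies in the identity component we may choose a path $g_t$ with $g_0=1$ and $g_1=g$. Then $d\,CS(g_t)=Ch(g)=dX$, so the difference $X-CS(g_t)$ is a closed even form. By Lemma~\ref{cor:CSontoClosedModExact}, this closed form equals $CS(h_t)$ modulo exact forms for some path $h_t$ with $h_0=1$.

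To conclude I would combine the two paths by block sum. Using $CS(g_t\oplus h_t)=CS(g_t)+CS(h_t)$ from Proposition~\ref{prop:CSprop}, we obtain $X\equiv CS(g_t)+CS(h_t)=CS(g_t\oplus h_t)$ modulo $Im(d)$. The path $g_t\oplus h_t$ starts at $1\oplus 1=1$, the identity in $\U$, so it lies in $Map^o(M\times I,\U)$, establishing surjectivity.

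I do not expect a genuine obstacle at this level: the two hard inputs have already done the real work, namely the construction of maps into $\U$ realizing prescribed exact odd forms (via the Clifford-algebra spheres and Moser's theorem) and the analytic determinant argument realizing prescribed closed even forms. The only points requiring care are the normalization $Ch(1)=0$, which holds because $1^{-1}\,d1=0$ kills every term of $Ch$, and the observation that block sum preserves the condition of starting at the identity; both are immediate. This is essentially the ``converse'' half of the proof of Lemma~\ref{lem:recast}, but carried out in the unquotiented group $\Om^{\textrm{even}}(M;\R)/Im(d)$ rather than after further quotienting by $Im([Ch])$.
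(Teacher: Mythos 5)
Your proof is correct, but it takes a somewhat different and more direct route than the paper's. The paper deduces the corollary from Corollary \ref{cor:betasurj}: surjectivity of $\beta$ gives $X = CS(g_t) + Ch(\nabla)$ modulo exact forms, and the leftover Chern form is then absorbed by invoking Theorem \ref{thm:CSCh} to write $Ch(\nabla) = CS(h_t)$ for a based loop $h_t$, concluding with the block sum $g_t \oplus h_t$ just as you do. You instead bypass the quotient by $Im([Ch])$ altogether: you first correct the differential, using Corollary \ref{cor:ChSurjOnExact} and a path from $1$ to $g$ to arrange $d\,CS(g_t) = Ch(g) = dX$, and then realize the closed remainder $X - CS(g_t)$ directly via Lemma \ref{cor:CSontoClosedModExact}, which---as you rightly observe---is stated modulo exact forms only, so no separate appeal to Theorem \ref{thm:CSCh} is needed at this step (its content enters only through the proof of that lemma). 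In effect you run the converse half of Lemma \ref{lem:recast} in $\Om^{\textrm{even}}(M)/Im(d)$ rather than in the further quotient. What the paper's route buys is brevity, since it recycles the already-established surjectivity of $\beta$ in two lines; what yours buys is a cleaner dependency structure, making explicit that the two primary inputs (Corollary \ref{cor:ChSurjOnExact} and Lemma \ref{cor:CSontoClosedModExact}) suffice on their own. The side points you flag are fine: $Ch(1)=0$ because every term of the formula in Lemma \ref{LEM:Ch-G=Ch-TWZ} contains a factor $g^{-1}dg$, the block sum $g_t \oplus h_t$ starts at $1 \oplus 1 = 1$ under stabilization, and choosing the path $g_t$ smooth for $g$ in the identity component is consistent with the paper's own conventions (as in the definition of $\beta$ and the description of $Ker(I)$).
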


 \begin{proof}
 Given $[X] \in \Om^{\textrm{even} } (M) / \left(Im(d) \right)$ we know by Corollary \ref{cor:betasurj} that  $X = \beta([g]) = CS(g_t)$ modulo $Im(Ch) + Im(d)$ for some $g_t \in Map^o(M \times I, U)$. In other words,  we have $X = CS(g_t) + Ch(\nabla)$ modulo exact forms. But $Ch(\nabla)$ is closed, and so by Theorem \ref{thm:CSCh} we may write $Ch(\nabla) = CS(h_t)$ for some $h_t \in Map^o(M \times S^1, U) \subset Map^o(M \times I, U)$. Therefore, $X = CS(g_t) + CS(h_t) = CS(g_t \oplus h_t)$ modulo exact forms is in the image of $CS|_{Map^o(M \times I, U)}$, as claimed.
 \end{proof}

\section{Calculation for a point}
 
One can calculate directly from the exact sequence property (3) in definition \ref{defn:diffext} that the odd differential extension of a point is $\hat K^{-1}(pt) \cong S^1 = \R / \Z$. In this short section we give an alternative straightward calculation of this result, that illuminates the meaning of $CS$-equivalence in this context, which could also be useful in other examples.
 
 \begin{lem} \label{lem:pt}
 Conjugate elements of $\U$ are $CS$-equivalent in $Map(pt,\U)$. In particular, after diagonalization, every element of $\hat K^{-1}(pt)$ is represented by a diagonal matrix. 
 
 Moreover, if $D$ is diagonal then $D$ is $CS$-equivalent to the one by one matrix given by $[det(D)]$.
 \end{lem}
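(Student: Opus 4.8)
The plan is to prove Lemma \ref{lem:pt} in two parts, using the $CS$-vanishing paths constructed in Lemma \ref{lem:CSzero} together with the explicit formula for $CS$ on a point coming from Lemma \ref{oddCS}. Since $M = pt$, all differential forms beyond degree zero vanish, so for a path $U_t \in Map(pt, \U) = \U$ the only surviving term in $CS(U_t)$ is the degree-zero piece
\[
CS(U_t) = \frac{1}{2\pi i}\Tr \int_0^1 U_t^{-1} U_t' \, dt \in \R / \Z,
\]
where the target is $\R/\Z$ because of the winding-number Lemma (the one preceding Proposition \ref{prop:CSuptoExact}). Thus $CS$-equivalence in $Map(pt,\U)$ is controlled entirely by this single $\R/\Z$-valued integral, and two elements are $CS$-equivalent precisely when they are joined by a path whose total trace-integral is an integer.

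For the first claim, that conjugate elements $g$ and $aga^{-1}$ are $CS$-equivalent, I would take a path $a_t$ in $\U$ from the identity to $a$ (which exists after stabilizing into some $U(n)$, since $U(n)$ is connected) and set $U_t = a_t g a_t^{-1}$. This is a path from $g$ to $aga^{-1}$, and one computes $U_t^{-1} U_t' = a_t g^{-1} a_t^{-1}(a_t' g a_t^{-1} - a_t g a_t^{-1} a_t' a_t^{-1}) = a_t(g^{-1}\cdot)\cdots$; after using cyclicity of the trace the two terms cancel, giving $\Tr(U_t^{-1}U_t') = 0$ pointwise in $t$, hence $CS(U_t) = 0$. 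In fact this is exactly the structure appearing in the first part of Lemma \ref{lem:CSzero}, and one could alternatively cite that computation directly. Diagonalizability of unitary matrices by conjugation then immediately yields that every class in $\hat K^{-1}(pt)$ is represented by a diagonal matrix.

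For the final and most substantive claim, that a diagonal $D = \mathrm{diag}(\lambda_1,\dots,\lambda_n)$ is $CS$-equivalent to $[\det(D)]$, I would argue by induction on $n$, reducing the problem to the two-by-two case $\mathrm{diag}(\lambda,\mu) \sim \mathrm{diag}(\lambda\mu, 1)$. The key tool is the path $X(t)$ from Lemma \ref{lem:CSzero}, which rotates between block positions while contributing nothing to $CS$. Writing $\lambda = e^{2\pi i \alpha}$ and $\mu = e^{2\pi i \beta}$, I would concatenate three segments: first a rotation $X(t)$ (whose $CS$ vanishes by the Lemma \ref{lem:CSzero} computation, since $X^{-1}JX = J$), then the ``swap and recombine'' move that transfers the phase from the second diagonal entry onto the first, and if needed a final rotation back. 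The trace-integral of the genuine phase-transfer segment must be checked to equal $\alpha + \beta \pmod{\Z}$, matching $[\det(D)] = [\lambda\mu]$; this is where the winding-number computation and the additivity $CS(g_t * h_t) = CS(g_t) + CS(h_t)$ from Proposition \ref{prop:CSprop} do the real work.

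The main obstacle I anticipate is the bookkeeping in this last step: one must exhibit an \emph{explicit} $CS$-vanishing (or integer-valued) path realizing the passage from $\mathrm{diag}(\lambda,\mu)$ to $\mathrm{diag}(\lambda\mu,1)$, rather than merely an abstract equality of winding numbers, since $CS$-equivalence is defined by the existence of a path with exact (here, integer) $CS$. The cleanest route is probably to note that $\mathrm{diag}(\lambda,\mu)$ and $\mathrm{diag}(\lambda\mu,1)$ are connected inside the maximal torus by the path $\mathrm{diag}(e^{2\pi i(\alpha + s\beta)}, e^{2\pi i(1-s)\beta})$, for which the trace of $U_s^{-1}U_s'$ is the constant $2\pi i \beta - 2\pi i \beta \cdot\tfrac{?}{}$; computing this integral directly gives an element of $\frac{1}{2\pi i}\Tr\int_0^1 U_s^{-1}U_s'\,ds = \beta + (-\beta) $ up to the appropriate sign, landing in $\Z$, so that $CS$ of this path vanishes in $\R/\Z$ and the two diagonal matrices are indeed $CS$-equivalent. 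Iterating the two-entry reduction collapses all phases onto the first coordinate, leaving $[\det D]$, and completing the induction.
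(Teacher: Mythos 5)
Your first part is essentially identical to the paper's: you conjugate along a path $a_t$ from $1$ to $a$, set $U_t = a_t g a_t^{-1}$, and use cyclicity of the trace to see $\Tr(U_t^{-1}U_t')=0$ pointwise in $t$; the paper does exactly this (with $P_t$ in place of $a_t$). For the second part, however, you take a genuinely different route. The paper connects $\operatorname{diag}(g,h)$ to $[gh]$ by the rotation--conjugation path $g_t = G X(t) H X(t)^{-1}$ recycled from the machinery of Lemma \ref{lem:CSzero}, verifying $\Tr(g_t^{-1}g_t') = \Tr(X^{-1}X' - X'X^{-1})=0$. You instead stay inside the maximal torus: writing $\lambda = e^{2\pi i\alpha}$, $\mu = e^{2\pi i \beta}$, your path
\[
U_s \;=\; \operatorname{diag}\bigl(e^{2\pi i(\alpha+s\beta)},\, e^{2\pi i(1-s)\beta}\bigr)
\]
satisfies $U_s^{-1}U_s' = \operatorname{diag}(2\pi i\beta,\,-2\pi i\beta)$, whose trace vanishes \emph{identically}, so $CS(U_s)=0$ on the nose --- a stronger and cleaner conclusion than the ``lands in $\Z$'' you tentatively wrote, and it renders the three-segment rotation/swap discussion in your middle paragraph unnecessary. (Note also that $\operatorname{diag}(\lambda\mu,1)$ equals $[\lambda\mu]$ in the stable group $\U$, so no further step is needed.) Your torus path is more elementary and requires no matrix identities beyond differentiating exponentials; the paper's path has the advantage of reusing the exact computation that elsewhere proves commutativity and inverses in $\hat K^{-1}$, so nothing new needs to be checked there.

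One misstatement should be fixed. Over a point, the relation $g_0\sim g_1$ requires $CS(g_t)$ to be $d$-exact, and the only exact even form on a point is $0$; so you need a path with $CS(g_t)=0$ exactly, not merely in $\Z$. Your assertion that equivalence holds precisely when some connecting path has integer trace-integral is true, but it needs the extra observation that one can concatenate with a loop of arbitrary prescribed integer winding based at either endpoint (e.g.\ $g\oplus e^{2\pi i n t}$, a loop at $g$ in the stable group, combined with the additivity $CS(g_t * h_t) = CS(g_t)+CS(h_t)$ of Proposition \ref{prop:CSprop}) --- an argument you did not supply. Fortunately every path you actually use has $CS$ identically zero, so your proof stands without the $\R/\Z$ scaffolding; I would simply delete that framing rather than justify it.
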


\begin{proof} Let $A \in U$. For any $P \in U$ we may choose a path $P_t$ such that $P_0 = 1$ and $P_1 = P$.
Then $g_t = P_t A P_t^{-1}$ is a path from $A$ to $PAP^{-1}$.  We calculate $CS(g_t)$, which in this case is a function (since we are over a point $M=pt$),
\begin{eqnarray*}
CS(g_t) &=& \Tr\left(\int_0^1 g_t^{-1} g_t' dt \right)  \\ 
&=& 
\Tr\left(\int_0^1 P_t A^{-1} P_t^{-1} \left(  P_t' A P_t^{-1} - P_t A P_t^{-1} P_t' P_t^{-1} \right) dt \right) \\
&=&\Tr\left(\int_0^1 P_t^{-1} P_t' - P_t' P_t^{-1} dt \right) \\
&=& 0.
\end{eqnarray*}

For the second statement, by induction it suffices to show that if $D$ is two by two and diagonal, with entries $g$ and $h$,
then $D \sim [gh]$, where $[gh]$ is a one by one matrix. Consider
\[
g_t = 
G
X(t)
H
X(t)^{-1}
\]
where 
\[
X(t) = \begin{bmatrix}
\cos(t) & \sin(t) \\
-\sin(t) & \cos(t)
\end{bmatrix}
\quad \quad
G =
\begin{bmatrix}
g & 0 \\
0 & 1
\end{bmatrix}
\quad \quad
H =
\begin{bmatrix}
1 & 0 \\
0 & h
\end{bmatrix}
\]
so that $g_0 = D$ and $g_{\pi / 2} = [gh]$. For this path we have $CS(g_t) = 0$ since 
\begin{multline*}
 \Tr \bigg( XH^{-1}X^{-1} G^{-1} \Big(  GX' HX^{-1}  - GX HX^{-1} X'  X^{-1} \Big)  \bigg) 
 \\ =  \Tr  \left( X^{-1} X'  -  X' X^{-1} \right) = 0.
\end{multline*}
This completes the proof of the lemma.
\end{proof}

 \begin{prop}
 The function $det : \hat K^{-1}(pt) \to S^1$ is a well defined group isomorphism.
 \end{prop}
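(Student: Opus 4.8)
The plan is to deduce all four required properties---well-definedness, the homomorphism property, surjectivity, and injectivity---from Lemma \ref{lem:pt} together with the elementary identity $\frac{d}{dt}\det(g_t) = \det(g_t)\,\Tr\!\big(g_t^{-1} g_t'\big)$ for a smooth path $g_t$ in $U(n)$.

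First I would check well-definedness. Over a point we have $\Omega^{\textrm{odd}}(pt;\R) = 0$, so $Im(d) = 0$ inside $\Omega^{\textrm{even}}(pt;\R) = \R$; hence an even form is exact precisely when it is zero, and by Lemma \ref{oddCS} the relation $g_0 \sim g_1$ means there is a path $g_t$ with $CS(g_t) = \frac{1}{2\pi i}\Tr\int_0^1 g_t^{-1} g_t'\,dt = 0$. Integrating the determinant identity along this path gives $\det(g_1) = \det(g_0)\exp\!\big(2\pi i\,CS(g_t)\big)$, so $CS(g_t) = 0$ forces $\det(g_0) = \det(g_1)$ in $S^1 = U(1)$. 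Thus $\det$ descends to a well-defined function on $\hat K^{-1}(pt)$. As a sanity check, applied to a loop this same identity recovers the integrality of the winding number proved earlier.

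The remaining properties are then formal. The homomorphism property follows from $\det(g \oplus h) = \det(g)\det(h)$, which under the identification $S^1 = \R/\Z$ is exactly addition. Surjectivity is witnessed by the one-by-one matrices: for $z \in S^1$ the element $[z] \in U(1) \subset U$ satisfies $\det([z]) = z$. For injectivity I would invoke Lemma \ref{lem:pt} directly: if $\det(g) = 1$ then $g$ is $CS$-equivalent to the one-by-one matrix $[\det(g)] = [1]$, which represents the identity class (the constant map to $1 \in U$), so $[g] = 0$ and the kernel is trivial.

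I expect the only real content to lie in the well-definedness step, where one must correctly translate $CS$-equivalence over a point into the vanishing of $CS(g_t)$ and then relate this to the determinant via the Jacobi identity; the homomorphism, surjectivity, and injectivity statements are immediate consequences of Lemma \ref{lem:pt} and standard properties of the determinant, so no serious obstacle remains.
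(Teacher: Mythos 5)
Your proof is correct, and it follows the same overall skeleton as the paper's: the homomorphism property from $\det(g\oplus h)=\det(g)\det(h)$, surjectivity via one-by-one matrices, and injectivity from Lemma \ref{lem:pt} (namely $g\sim[\det(g)]$, so $\det(g)=1$ forces $[g]=0$). The one genuine difference is how well-definedness is handled. The paper first uses Lemma \ref{lem:pt} to reduce both representatives to the one-by-one matrices $[\det(A)]$ and $[\det(B)]$, and then asserts that any path $g_t$ in $U$ joining distinct points $e^{i\theta_0}\neq e^{i\theta_1}$ has $CS(g_t)=\frac{1}{2\pi i}\Tr\int_0^1 g_t^{-1}g_t'\,dt\neq 0$, leaving the justification of this nonvanishing implicit. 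You instead prove invariance of $\det$ under $CS$-equivalence directly from Jacobi's formula $\frac{d}{dt}\det(g_t)=\det(g_t)\Tr\bigl(g_t^{-1}g_t'\bigr)$, which yields $\det(g_1)=\det(g_0)\exp\bigl(2\pi i\,CS(g_t)\bigr)$ for an arbitrary path in $U(n)$ (note $CS(g_t)$ is real since $g_t^{-1}g_t'\in\mathfrak{u}(n)$ has purely imaginary trace), after correctly observing that over a point exactness of $CS(g_t)$ means $CS(g_t)=0$ because $\Omega^{\textrm{odd}}(pt;\R)=0$. This makes explicit the one step the paper states tersely, bypasses the diagonalization reduction for well-definedness, and, as you note, re-derives the integrality of the winding number when specialized to loops; both routes are equally valid, with yours slightly more self-contained at that point.
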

 
\begin{proof} If $A,B \in U$ and $A \sim B$ then by the above Lemma \ref{lem:pt}, $[det(A)] \sim [det(B)]$. So the function is well defined if  
we show $[e^{i \theta_0}] \sim [e^{i \theta_1}]$ implies $e^{i \theta_0} = e^{i \theta_1}$. If 
$e^{i \theta_0} \neq e^{i \theta_1}$ then for any path $g_t \in U$ such that $g_0 = e^{i \theta_0}$ and 
$g_1 = e^{i \theta_1}$ we have 
\[
CS(g_t) = \Tr \left( \int_0^1 g_t^{-1} g_t'  \right) dt \neq 0.
\]
The function is clearly surjective, and it is injective since if $det(A) = 1$ then $A \sim [det(A)] = [1]$ so that $A$ represents the identity. Finally, the function is a group homomorphism since 
$det(A \oplus B) = det(A)det(B)$.
\end{proof}

\appendix
\section{Alternative formulation}\label{sec:variant}

In this appendix, we give an alternative definition the odd differential extension from Section \ref{sec:elemmodel}, which \emph{a prior} is defined via a larger generating set, but in fact turns out to be naturally equivalent to the differential extension $\hat{K}^{-1}(M)$ from Section \ref{sec:elemmodel}. This construction is closely related to the one given by Freed and Lott in \cite{FL}.

For any compact manifold $M$ one can always include the additional data of an even differential form, making the following definition:
\[
S(M) = \{ (g, X) | g \in Map (M, \U ), X \in \Om^{even}(M;\R) \}
\]
This set is a monoid under the operation $(g, X) + (h,Y) = (g \oplus h, X + Y)$ with identity given by $(1,0)$ where $1$ is the constant map to the identity $1\in U$.
We define an equivalence relation $\sim$ on $S(M)$ by declaring
\[
(g_0, X_0) \sim (g_1,X_1) 
\]
if and only if, there is a path $g_t$ from $g_0$ to $g_1$ such that: 
\[
 CS(g_t) \equiv X_1 - X_0  \, \, \textrm{mod exact forms}
\]
The relation $\sim$ defines an equivalence relation, and we denote the set of equivalence classes by $\Kalt(M) = S(M)/\sim$ \,, and we denote the equivalence class of a pair $(g,X)$ by $[g,X]$. 
It follows from Lemma \ref{lem:CSzero}, just as in the proof of Proposition \ref{prop:group}, that the equivalence classes form an abelian group under the sum operation described above. 

Furthermore, for a morphism $f:M'\to M$ of smooth manifolds, we define $f([g,X])=[g\circ f, f^*(X)]$ as the pullback of each component, so in this way $\Kalt$ defines a contravariant functor.

There is a commutative diagram of group homomorphisms 
\[
\xymatrix{
 & K^{-1}(M) \ar[rd]^{[Ch]} & \\
 \Kalt(M) \ar[ru]^{\Ialt} \ar [rd]^{\Ralt} & &H^{\textrm{odd}}(M;\R) \\
 & \Om^{\textrm{odd}}_{cl} (M;\R) \ar[ru]_{deRham} &
}
\]
which is natural in $M$, where  $K^{-1}(M) $ is set of homotopy classes of elements of $Map (M, \U)$, $\Ialt([g,X])$ is the homotopy class of $g$, and $\Ralt([g,X]) = Ch(g) + dX$.

For this construction of $\Kalt(M)$, an equivalence class $[g,X]$ has the additional data of an even differential form, so it is straightforward to define
\[
\aalt : \Om^{\textrm{even}} (M) / Im(d) \to \Kalt(M)
\]
 by 
 \[
 \aalt(X) = [1, X]
 \]
 where $1 \in Map (M, \U )$ is the constant map to $1 \in U$.

The map $\aalt$ is well defined since the Chern-Simons form associated to the constant path at $1 \in U$ is zero. It is straightforward to see that $\aalt$ is a group homomorphism. Furthermore, we have $\Ralt \circ \aalt = d$ since $\Ralt(\aalt([X])) = \Ralt([1,X]) = Ch(1) + dX = dX$.
Finally, we have:
\begin{prop} There is an exact sequence
\[
\xymatrix{
 K^0(M) \ar[r]^-{[Ch]} & \Om^{\textrm{even} } (M) / Im(d) \ar[r]^-{\aalt} &  \Kalt(M) \ar[r]^{\Ialt} & K^{-1}(M) \ar[r] & 0
}
\]
where $Ch : K^0(M) \to \Om^{\textrm{even} } (M) / Im(d)$ is the ordinary even Chern character map.
\end{prop}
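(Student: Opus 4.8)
The plan is to verify exactness of the four-term sequence at each of its three interior nodes, working from right to left since the difficulty grows in that direction; the vanishing of the consecutive compositions $\Ialt\circ\aalt$ and $\aalt\circ[Ch]$ will be absorbed into these exactness checks. Surjectivity of $\Ialt$ at $K^{-1}(M)$ is immediate: every homotopy class is represented by some $g\in Map(M,\U)$, and $\Ialt([g,0])$ is by definition that class. So the first thing I would record is this trivial rightmost exactness.

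Next I would treat exactness at $\Kalt(M)$. The inclusion $Im(\aalt)\subseteq Ker(\Ialt)$ is clear, since $\Ialt(\aalt(X))=\Ialt([1,X])$ is the homotopy class of the constant map, namely the identity of $K^{-1}(M)$. For the reverse inclusion I would take $[g,X]\in Ker(\Ialt)$, so that $g$ is null-homotopic, and choose a path $g_t$ in $Map(M,\U)$ with $g_0=1$ and $g_1=g$. The key observation is that this very path realizes the defining equivalence of $\Kalt(M)$ on the nose: it witnesses $(1,\,X-CS(g_t))\sim(g,X)$, because the required relation $CS(g_t)\equiv X-(X-CS(g_t))$ modulo exact forms holds with equality. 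Hence $[g,X]=\aalt(X-CS(g_t))\in Im(\aalt)$, giving $Ker(\Ialt)\subseteq Im(\aalt)$.

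The main content lies in exactness at $\Om^{\textrm{even}}(M)/Im(d)$, and this is where the form-level Bott periodicity does all the work. Unwinding the definitions, $\aalt(X)=[1,X]$ equals the identity $[1,0]$ precisely when $(1,X)\sim(1,0)$, i.e.\ when there is a loop $g_t$ based at $1$ (a map $M\to\Omb U$) with $CS(g_t)\equiv -X$ modulo exact forms; since reversing a based loop negates its Chern--Simons form, this is equivalent to $X\equiv CS(h_t)$ modulo exact for some based loop $h_t$. Thus $Ker(\aalt)$ is exactly the set of classes represented by Chern--Simons forms of based loops. I would then invoke Theorem \ref{thm:CSCh}, which identifies this set with $\{Ch(\nabla)\}$ modulo exact: its first half gives $Im([Ch])\subseteq Ker(\aalt)$ (each $Ch(\nabla)$ is $CS$ of a based loop, so $\aalt([Ch(\nabla)])=0$), and its converse half gives $Ker(\aalt)\subseteq Im([Ch])$ (each based-loop $CS(h_t)$ agrees modulo exact with some $Ch(\nabla)$). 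Matching these descriptions yields $Ker(\aalt)=Im([Ch])$, completing the argument.

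The hard part is therefore entirely concentrated in this last node, but it has already been carried out in Theorem \ref{thm:CSCh}; given that result, what remains is purely a matter of unwinding the equivalence relation defining $\Kalt(M)$ and the formula for $\aalt$. The only subtlety to watch is the bookkeeping of the ``modulo exact'' conditions together with the requirement that the interpolating loops be genuinely based at $1\in U$, so that they correspond to maps $M\to\Omb U$ to which Theorem \ref{thm:CSCh} applies. I note finally that this entire verification parallels the one for $\hat K^{-1}(M)$ in Section \ref{sec:elemmodel}, the difference being that here the even form $X$ is supplied directly as part of the data rather than reconstructed through the isomorphism $\beta$.
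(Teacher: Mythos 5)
Your proof is correct and takes essentially the same route as the paper's: both handle exactness at $\Kalt(M)$ by absorbing $CS(g_t)$ of a nullhomotopy into the form component (so $[g,X]=\aalt(X-CS(g_t))$, equivalently the paper's $(g,X)\sim(1,CS(g_t)+X)$), and both identify $Ker(\aalt)$ with the set of Chern--Simons forms of loops based at $1$, which Theorem \ref{thm:CSCh} then matches with $Im([Ch])$. The only differences are cosmetic --- the orientation of the nullhomotopy and your explicit sign bookkeeping via loop reversal.
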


\begin{proof} 
We first show that $Im( \aalt) = Ker (\Ialt)$. We have
\[
Ker(\Ialt) = \{ [g,X] | \, \textrm{there is a path $g_t$ such that $g(0) = g$ and $g(1) = 1$} \}
\]
while
\[
Im(\aalt) = \{ [1, X] | \, X \in \Om^{\textrm{even} } (M;\R) \}
\]
So, certainly $Im(\aalt) \subset Ker(\Ialt)$. Conversely, if $[g,X] \in Ker(\Ialt)$, and $g_t$ is a path such that $g(1) = 1$ and $g(0) = g$,
let $Y = CS(g_t) + X$. Then $(g,X) \sim (1,Y)$ since $Y - X = CS(g_t)$. This shows $[g,X] \in Im(\aalt)$ and therefore $Ker(\Ialt) \subset Im(\aalt)$.

Finally, we show that $Im([Ch]) = Ker(\aalt)$. First, note that
\[
Ker(\aalt) = \{ X   | \, X = CS(g_t) \,  \textrm{mod exact forms,  for some loop $g_t$ based at $1$} \},
\]
since $(1,X) \sim (1, 0)$ if and only if there is a path $g_t$ such that $g(0) = g(1) = 1$ and $X = CS(g_t)$ mod exact forms. So it suffices to show that the set of even Chern forms 
equals the set of differential forms of the form $CS(g_t)$ for some loop based at $1$ (modulo exact forms). This follows from Theorem \ref{thm:CSCh}.
\end{proof}

We now compare the differential extension $\hat{K}^{-1}$ from Section \ref{sec:elemmodel} with $\Kalt$ from this section.
 We use the natural notion of equivalence, \emph{i.e.} a natural isomorphism of functors $ \hat{K}^{-1} \to \Kalt$ that commutes with 
 additional data $(R,I,a)$ and $(S,J,b)$, see \cite[Definition 1.2]{BS3}.
 
\begin{prop}
The natural isomorphism $\Phi:\hat{K}^{-1}\to \Kalt$ given by $[g] \mapsto [g,0]$, where $g:M \to U$, is a natural equivalence from the differential extensions $(\hat{K}^{-1},R,I,a)$ to the differential extension $(\Kalt,S,J,b)$. 
\end{prop}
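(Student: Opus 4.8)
The plan is to verify directly that $\Phi([g])=[g,0]$ is a morphism of differential extensions---that it is a natural group isomorphism intertwining the triples $(R,I,a)$ and $(\Ralt,\Ialt,\aalt)$---by checking compatibility with each structure map and then bijectivity. Most of these checks are formal; the single substantive point, reconciling the maps $a$ and $\aalt$, is where the earlier results genuinely enter.

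First I would record the formal verifications. The assignment is well defined because if $g_0\sim g_1$ in $\hat{K}^{-1}(M)$, witnessed by a path $g_t$ with $CS(g_t)$ exact, then $CS(g_t)\equiv 0-0$ modulo exact forms, so $(g_0,0)\sim(g_1,0)$ and $[g_0,0]=[g_1,0]$. Naturality is immediate from $f^*[g,0]=[g\circ f,0]$, and the homomorphism property is immediate from the definition of block sum on both sides, since $[g\oplus h,0]=[g,0]+[h,0]$. Compatibility with the curvature and underlying-class maps is then a single line each: $\Ralt(\Phi([g]))=\Ralt([g,0])=Ch(g)+d(0)=Ch(g)=R([g])$, and $\Ialt(\Phi([g]))=\Ialt([g,0])$ is the homotopy class of $g$, which is $I([g])$.

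The heart of the argument is the compatibility $\Phi\circ a=\aalt$. Unwinding $a=\beta^{-1}\circ\pi$, for $X\in\Om^{\textrm{even}}(M)/Im(d)$ one has $a(X)=[h]$, where $h_t$ is a based path with $h_0=1$, $h_1=h$ and $CS(h_t)\equiv X$ modulo $Im(d)+Im([Ch])$, so that $\Phi(a(X))=[h,0]$, whereas $\aalt(X)=[1,X]$. To compare these I would eliminate the $Im([Ch])$-indeterminacy inherent in the definition of $\beta$ by using Theorem \ref{thm:CSCh} to write the residual Chern form as $CS(\ell_t)$ for a based loop $\ell_t$, and then splice $\ell_t$ (or its inverse) onto $h_t$ via the path composition of Proposition \ref{prop:CSprop}. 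This produces a based path from $1$ to $h$ whose Chern--Simons form agrees, modulo exact forms, with the even datum carried by $\aalt$; such a path is precisely the witness establishing $\Phi(a(X))=\aalt(X)$ in $\Kalt(M)$. I expect this to be the crux, as it is exactly the place where the class-level information produced by $\beta$ must be matched against the honest differential form stored in $\Kalt$, and that matching is available only because Theorem \ref{thm:CSCh} identifies even Chern forms with Chern--Simons forms of based loops.

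Finally I would prove $\Phi$ is bijective. Injectivity is direct: $\Phi([g])=\Phi([g'])$ says $(g,0)\sim(g',0)$, i.e. there is a path $g\to g'$ with $CS$ exact, which is the defining relation $g\sim g'$ in $\hat{K}^{-1}(M)$. For surjectivity I would write $[g,X]=[g,0]+[1,X]=\Phi([g])+\aalt(X)$ (using $g\oplus 1=g$), reducing the problem to $\aalt(X)\in Im(\Phi)$; by Corollary \ref{cor:CSsurj} one realizes $-X$ as $CS(q_t)$ for a based path $q_t$ with $q_0=1$, and the reversed path then witnesses $(q_1,0)\sim(1,X)$, so that $\aalt(X)=[1,X]=\Phi([q_1])$ lies in the image. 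Alternatively, once the relevant squares are known to commute one may invoke the five lemma applied to the two exact sequences of Definition \ref{defn:diffext}, whose outer vertical maps are the identities on $K^0(M)$, $\Om^{\textrm{even}}(M)/Im(d)$ and $K^{-1}(M)$ with $\Phi$ in the middle. Either route shows $\Phi$ is a natural isomorphism compatible with all the data, hence a natural equivalence of differential extensions.
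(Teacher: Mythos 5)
Your proposal is correct and takes essentially the same route as the paper: the formal checks (well-definedness, naturality, homomorphism, compatibility with $R$ and $I$, injectivity) are the same one-liners, surjectivity rests on Corollary \ref{cor:CSsurj} exactly as in the paper, and your splicing of a based loop via Theorem \ref{thm:CSCh} and Proposition \ref{prop:CSprop} to kill the $Im([Ch])$-ambiguity in $a$ is precisely the content of Corollary \ref{cor:CSsurj}, which the paper simply invokes at that point to choose a path with $CS(g_t)=X$ modulo exact forms outright. The only variations are cosmetic: you re-derive that corollary inline with path composition where its proof uses block sums, and you offer an optional five-lemma alternative for bijectivity that the paper does not need.
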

\begin{proof}
This map is easily seen to be well defined, injective and a natural homomorphism. 

The map $\Phi$ is surjective if and only if every $(g,X)$ is equivalent to some $(h,0)$, \emph{i.e.} for every map $g : M \to U$ and every even form $X$ there is a path $g_t$ such that $g_0=g$, $g_1=h$, and $CS(g_t) = X$ modulo exact. By Corollary \ref{cor:CSsurj} there is $k_t: M \times I \to U$ so that $k_0=1$ and $CS(k_t) = X$ modulo exact forms, so  for $g_t = g \oplus k_t$,  we have $g_0 =g$ and 
\[
CS(g_t) = CS(g) + CS(k_t) = CS(k_t) = X \quad \textrm{mod exact forms}.
\]
That is, $(g,X)$ is equivalent to $(g \oplus k_1, 0)$, so indeed the natural map is surjective, and 
$\Phi:\hat{K}^{-1}(M)\to \Kalt(M)$ is an isomorphism for each $M$.

It is immediate to check that  $\Ialt \circ \Phi=  I$ and  $\Ralt \circ \Phi= R $.
Finally, we  show that for $[X]\in \Om^{\textrm{even}}/Im(d)$,  $\Phi\circ a([X])$ equals $\aalt([X])= [1,X]$ in $\Kalt(M)$. Recall that $a([X])\in \hat{K}^{-1}(M)$ is defined as $[g_1]$ for any choice $g_t$ with $g_0=1$ and $CS(g_t)=[X]$ modulo $Im(Ch)$. By Corollary \ref{cor:CSsurj} we may choose $g_t$ so that $CS(g_t)=X\in \Om^{\textrm{even}}(M)$ modulo exact forms. 
Then, for this choice $g_t$, we have $\Phi\circ a([X])=\Phi([g_1])=[g_1,0] = [1,X] = \aalt([X])$. 
This completes the proof of the proposition.
\end{proof}

\end{document}